\newtheorem{theorem}{Theorem}
\theoremstyle{plain}
\newtheorem{corollary}{Corollary}
\newtheorem{lemma}{Lemma}
\newtheorem{proposition}{Proposition}
\newtheorem{remark}{Remark}
\numberwithin{equation}{section}
\renewcommand{\leq}{\leqslant}
\renewcommand{\geq}{\geqslant}
\begin{document}
\title[A New Characterization of the CR Sphere]{A New Characterization of the CR Sphere and the sharp eigenvalue estimate for
the Kohn Laplacian}

\author{Song-Ying Li}
\address{Department of Mathematics, University of California, Irvine, CA 92697}
\email{sli@math.uci.edu}
\author{Duong Ngoc Son}
\address{Department of Mathematics, University of California, Irvine, CA 92697}
\email{snduong@math.uci.edu }
\author{Xiaodong Wang}
\address{Department of Mathematics, Michigan State University, East Lansing, MI 48824}
\email{xwang@math.msu.edu}
\maketitle

\section{Introduction\bigskip}

Let $\left(  M,\theta\right)  $ be a pseudohermitian
manifold of dimension $2m+1$ and $T$ the Reeb vector field. We always work
with a local unitary frame $\{T_{\alpha}:\alpha=1,\cdots,m\}$ for
$T^{1,0}\left(  M\right)  $ and its dual frame $\left\{  \theta^{\alpha
}\right\}  $. Thus%
\[
d\theta=\sqrt{-1}\sum_{\alpha}\theta^{\alpha}\wedge\overline{\theta^{\alpha}%
}.
\leqno(1.1)
\]
We will often denote $T$ by $T_{0}$. In \cite{LW}, the first and third named
authors proved the following Obata type result in CR geometry.

\begin{theorem}
\label{Ob}Let $M$ be a closed pseudohermitian manifold of dimension $2m+1\geq
5$. Suppose there is a real-valued nonzero function $u\in C^{\infty}\left(  M\right)
$ satisfying%
\begin{align*}
u_{\alpha,\beta}  &  =0,\\
u_{\alpha,\overline{\beta}}  &  =\left(  -\frac{\kappa}{2\left(  m+1\right)
}u+\frac{\sqrt{-1}}{2}u_{0}\right)  \delta_{\alpha\beta},
\end{align*}
for some constant $\kappa>0$. Then $M$ is equivalent to the sphere
$\mathbb{S}^{2m+1}$ with its standard pseudohermitian structure, up to a scaling.
\end{theorem}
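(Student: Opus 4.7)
The plan is to mimic Obata's classical rigidity argument in the CR setting. Rather than attempting to construct an explicit CR embedding of $M$ into a sphere in $\mathbb{C}^{m+1}$ directly from the function $u$, I would propagate the prescribed pointwise identities on the complex Hessian of $u$ into identities on the Tanaka--Webster curvature and torsion of $(M,\theta)$, and then invoke the classification of Sasakian space forms of positive holomorphic sectional curvature. The equation $u_{\alpha,\beta}=0$ plays the role of a ``CR pluriharmonicity'' condition on $u$, while $u_{\alpha,\overline{\beta}}=\lambda\,\delta_{\alpha\beta}$ is the CR analogue of the Obata--Lichnerowicz ``pure trace'' Hessian equation; together the two should be strong enough to force rigidity.

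The first step is to take covariant derivatives of the two given equations and apply the standard Tanaka--Webster commutation formulas. Rewriting the commutators
\[
u_{\alpha,\beta\gamma}-u_{\alpha,\gamma\beta},\qquad u_{\alpha,\overline{\beta}\gamma}-u_{\alpha,\gamma\overline{\beta}},\qquad u_{\alpha,\overline{\beta}0}-u_{\alpha,0\overline{\beta}},
\]
in terms of the Webster curvature tensor $R_{\alpha\overline{\beta}\gamma\overline{\delta}}$, the Webster--Ricci tensor $R_{\alpha\overline{\beta}}$, and the pseudohermitian torsion $A_{\alpha\beta}$, and then substituting the prescribed values of $u_{\alpha,\beta}$ and $u_{\alpha,\overline{\beta}}$, produces a family of algebraic identities. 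Tracing these over the Hermitian indices and pairing suitable contractions against $u_\alpha$, I expect to extract
\[
A_{\alpha\beta}\equiv 0\qquad\text{and}\qquad R_{\alpha\overline{\beta}}=\tfrac{\kappa}{m+1}\delta_{\alpha\beta},
\]
so that $(M,\theta)$ becomes Sasakian and Webster--Einstein with positive constant scalar curvature $\kappa$.

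A further round of differentiation, now using the Sasakian Bianchi identities together with the fact that the complex Hessian of $u$ is pure trace, should upgrade the Einstein identity to
\[
R_{\alpha\overline{\beta}\gamma\overline{\delta}} = \tfrac{\kappa}{2(m+1)}\bigl(\delta_{\alpha\beta}\delta_{\gamma\delta}+\delta_{\alpha\delta}\delta_{\gamma\beta}\bigr),
\]
that is, $(M,\theta)$ has constant holomorphic sectional curvature $\kappa/(m+1)$. The classification of Sasakian space forms then identifies the universal cover of $(M,\theta)$ with the standard CR sphere $\mathbb{S}^{2m+1}$ up to the asserted scaling, and the existence on $M$ itself of a real-valued eigenfunction $u$ satisfying the prescribed identities forces the deck group of the covering to be trivial, completing the identification $M\cong\mathbb{S}^{2m+1}$.

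The main obstacle I foresee is the vanishing of the pseudohermitian torsion in the first rigidity step: this is the distinctively CR (rather than Riemannian) part of the argument, and it requires exploiting the coupling between $u_{\alpha,\beta}=0$ and the prescribed value of $u_{\alpha,\overline{\beta}}$ rather than each equation in isolation. The dimension hypothesis $2m+1\geq 5$, i.e.\ $m\geq 2$, enters exactly through the Hermitian traces used in this step: they are vacuous for $m=1$, where $R_{\alpha\overline{\beta}\gamma\overline{\delta}}$ has only one independent component and the Einstein condition already exhausts the information carried by the full curvature tensor, so no analogous argument could distinguish the CR sphere from other three-dimensional Sasaki--Einstein manifolds.
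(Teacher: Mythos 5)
Your blueprint---differentiate the Hessian identities, commute covariant derivatives, trace, pass to a curvature space form---is conceptually sensible, but the crucial middle step is nowhere near as routine as you present it, and the paper itself says so. The introduction explicitly records that in \cite{LW} ``the vanishing of torsion was deduced from estimates regarding high powers of the real-valued function $u$''; i.e.\ the authors of that paper could \emph{not} extract $A_{\alpha\beta}\equiv 0$ simply by differentiating the Hessian identities and contracting against $u_\alpha$. The pointwise commutator computations yield much less. Differentiating $u_{\alpha,\beta}=0$ and commuting gives only a symmetry relation of the schematic form $A_{\bar\beta\bar\gamma}u_\alpha=A_{\bar\alpha\bar\gamma}u_\beta$, which says $A$ is, in effect, rank one in the gradient direction; it does not force $A=0$. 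Compare the analogous computation in Section~5 of this paper for the complex-valued $f$: differentiating $f_{\bar\alpha,\bar\beta}=0$ gives only $A_{\alpha\beta}\bar f_\gamma=A_{\alpha\gamma}\bar f_\beta$, after which the authors must introduce the auxiliary quotient $\psi = 2Q/|\overline{\partial}_b f|^2$ and run several further propositions, an integration-by-parts identity and a maximum-point argument, before $\psi$ (hence $A$) is killed. So the step where you assert that contractions ``extract $A_{\alpha\beta}\equiv 0$'' is a genuine gap; without it you have neither the Sasakian structure needed for your Bianchi identities, nor the Einstein condition, nor the eventual space-form classification.

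A second issue: even granting $A=0$, upgrading the Einstein identity to constant holomorphic sectional curvature ``by a further round of differentiation'' is again an unjustified leap; in the Riemannian Obata theorem the analogous upgrade is done by integrating the Hessian equation along geodesics, not by pointwise algebra. The route taken in the literature is more elementary than yours: once $A=0$, the two Hessian identities combine via the adapted Riemannian metric $g_\theta$ into the real equation $D^2u=-\frac{u}{4}\,g_\theta$, the classical Riemannian Obata theorem then identifies $(M,g_\theta)$ isometrically with the round sphere, and one finally checks that the Reeb field, being a unit Killing field of the round metric, must be the Hopf field---so $\theta$ is standard. This bypasses Sasakian space-form classification entirely. (Also note that Theorem~\ref{Ob} is quoted here from \cite{LW} rather than proved in this paper; the argument presented in Section~5 treats the complex-valued analogue, whose torsion step is organized quite differently.)
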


A weaker version of the above theorem is also proved in dimension $3$ ($m=1$)
in \cite{LW} which requires an additional condition.

In this paper we prove a variant of the above theorem which characterizes the
CR sphere in terms of the existence of a (non-trivial) \emph{complex-valued} function satisfying 
a certain overdetermined system. The precise statement is the following theorem.

\begin{theorem}
\label{main}Let $\left(  M,\theta\right)  $ be closed pseudohermitian
manifold with dimension $2m+1\geq5$. Suppose\ that there exists a nonzero
complex-valued function $f$ on $M$ satisfying%
\begin{align*}
f_{\overline{\alpha},\overline{\beta}}  &  =0,\\
f_{\overline{\alpha},\beta}  &  =-cf\delta_{\alpha\beta}%
\end{align*}
for some constant $c>0$. Then $\left(  M,\theta\right)  $ is CR equivalent to
$\mathbb{S}^{2m+1}$ with its standard pseudohermitian structure, up to a scaling.
\end{theorem}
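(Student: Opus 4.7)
My plan is to reduce Theorem \ref{main} to the Obata-type characterization Theorem \ref{Ob} by producing, out of the complex function $f$, a real-valued function $u$ on $M$ that satisfies the overdetermined system of that theorem. The natural candidate is $u = \tfrac12(f+\bar f) = \operatorname{Re} f$.

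First I would conjugate the two hypotheses to obtain $\bar f_{\alpha,\beta}=0$ and $\bar f_{\alpha,\bar\beta}=-c\bar f\,\delta_{\alpha\beta}$, and apply the standard scalar commutator $f_{\alpha,\bar\beta} - f_{\bar\beta,\alpha} = i f_0\,\delta_{\alpha\beta}$ (the sign convention consistent with Theorem \ref{Ob}) to deduce $f_{\alpha,\bar\beta} = (-cf + i f_0)\,\delta_{\alpha\beta}$. Adding then yields the crucial identities
\[
u_{\alpha,\beta} = \tfrac12 f_{\alpha,\beta}, \qquad u_{\alpha,\bar\beta} = \bigl(-cu + \tfrac{i}{2}f_0\bigr)\delta_{\alpha\beta}.
\]
Matching these with the target system $u_{\alpha,\beta}=0$ and $u_{\alpha,\bar\beta} = \bigl(-\tfrac{\kappa}{2(m+1)}u + \tfrac{i}{2}u_0\bigr)\delta_{\alpha\beta}$ reduces the proof to two auxiliary identities: first, $f_{\alpha,\beta}=0$, so that the holomorphic Hessian of $u$ vanishes; and second, $f_0 = i\mu f$ for some real constant $\mu$, which forces $u_0 = -\mu\,\operatorname{Im} f$ and makes the mixed Hessian of $u$ agree with the Obata form with $\kappa = (m+1)(2c+\mu)$. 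The sphere model $f = \bar z_j$ realizes these with $c=1$, $\mu = -1$, giving $\kappa = m+1 > 0$ and supporting the proposed normalization.

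The heart of the argument is therefore to establish these two auxiliary identities. For the first, I would differentiate the antiholomorphic hypothesis $f_{\bar\alpha,\bar\beta} = 0$ and the mixed hypothesis $f_{\bar\alpha,\beta} = -c f\,\delta_{\alpha\beta}$, and commute the resulting third covariant derivatives via the Tanaka-Webster Ricci identities for a $(0,1)$-form, carefully tracking the curvature terms $R_{\alpha\bar\beta\gamma\bar\delta}$ and the pseudohermitian torsion $A_{\alpha\beta}$. Integrating the resulting identity by parts against $\bar f_{\bar\alpha,\bar\beta} = \overline{f_{\alpha,\beta}}$ should produce a Bochner-type relation forcing $\int_M |f_{\alpha,\beta}|^2\,dV = 0$. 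For the second, I would differentiate the mixed hypothesis in the Reeb direction and commute $\nabla_0$ past $\nabla_\beta$ and $\nabla_{\bar\alpha}$, which introduces torsion terms; this should show that $f_0$ again satisfies the overdetermined system of the theorem. A linear-uniqueness argument for that system, together with an integral pairing against $\bar f$ using the trace identity $f_{\bar\alpha,\alpha} = -m c f$, then yields $f_0 = i\mu f$ for a uniquely determined real constant $\mu$ satisfying $2c+\mu > 0$.

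Once both identities are in hand, $u = \operatorname{Re} f$ satisfies the overdetermined system of Theorem \ref{Ob} with $\kappa = (m+1)(2c+\mu) > 0$; if $u$ vanishes identically, the same argument applied to $v = \operatorname{Im} f$ works instead (and at least one of $u$, $v$ is nontrivial since $f \not\equiv 0$). Theorem \ref{Ob} then identifies $(M,\theta)$ with the standard CR sphere up to scaling, completing the proof. The principal technical obstacle is the Bochner calculation for the first auxiliary identity: the $(2,0)$-Hessian $f_{\alpha,\beta}$ is not directly controlled by the hypotheses, so forcing its vanishing requires delicate bookkeeping of the curvature and torsion contributions, exploiting both the complete vanishing of the antiholomorphic Hessian and the rigid diagonal form of the mixed one.
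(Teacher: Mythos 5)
Your reduction target is the right one: the paper, too, ends by writing $f=u+\sqrt{-1}v$ and showing that $u$ solves the Li--Wang overdetermined system of Theorem~\ref{Ob} (with $\kappa=\tfrac{m+1}{2}$ after normalizing $c=\tfrac12$, so $v=2u_0$). However, the two ``auxiliary identities'' you propose to plug in are not small lemmas that a routine commutator/Bochner calculation will deliver --- unpacked, they are precisely the hardest content of the paper, and your proposed routes to them do not work.

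The clearest problem is the second identity. You argue that $f_0$ ``again satisfies the overdetermined system'' and then appeal to a ``linear-uniqueness argument'' to get $f_0=\sqrt{-1}\mu f$ for a real constant $\mu$. The solution space of that system is \emph{not} one-dimensional: on $\mathbb{S}^{2m+1}$ it is spanned by $\bar z_1,\dots,\bar z_{m+1}$, an $(m+1)$-dimensional space. So even if $f_0$ solves the same system (which is itself unclear, since commuting $\nabla_0$ past $\nabla_\beta$ and $\nabla_{\bar\alpha}$ produces torsion terms $A_{\alpha\beta}$ that you have not controlled), no uniqueness forces pointwise proportionality to $f$. In fact, the paper's analysis shows that $\sqrt{-1}f_0=\tfrac12 f-\tfrac12\psi\bar f$ on the dense open set where $\overline\partial_b f\neq 0$, with $\psi=2\sqrt{-1}A_{\alpha\beta}f_{\overline\alpha}f_{\overline\beta}/|\overline\partial_b f|^2$; so your claim $f_0=\sqrt{-1}\mu f$ is essentially \emph{equivalent} to the vanishing of the torsion ratio $\psi$, which is the theorem's core difficulty.

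The first identity fares no better. On the dense open set $M\setminus K$ one has $f_{\alpha,\beta}=\psi_\beta\bar f_\alpha$, so asserting $f_{\alpha,\beta}=0$ amounts to asserting $\partial_b\psi=0$ (Lemma~\ref{pb} of the paper). The paper obtains this only after a long chain: showing $\overline\partial_b\psi=0$, deriving the Ricci identity $R_{\alpha\overline\beta}f_{\overline\alpha}=\tfrac{m+1}{2}f_{\overline\beta}$, establishing the integral identity \eqref{int0}, and matching it against a second integral identity obtained from a carefully constructed divergence. None of this resembles the one-shot Bochner argument you sketch (``integrate the identity against $\bar f_{\bar\alpha,\bar\beta}$''); the torsion contributions do not cancel for free. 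And even once $\partial_b\psi=0$ and $\overline\partial_b\psi=0$ are known, concluding $\psi\equiv 0$ still requires the pointwise contradiction argument at a maximum of $|f|^2$ (and separately the bound $|\psi||f|\leq 4C|\overline\partial_b f|$), which your proposal omits entirely. In short: the skeleton of your reduction is correct and matches where the paper ends up, but the two lemmas you defer to ``careful bookkeeping'' are exactly the content of Propositions~\ref{pbar}, Lemma~\ref{pb}, and the lemma $\psi\equiv 0$, and the arguments you sketch for them (Bochner forcing $\int|f_{\alpha,\beta}|^2=0$, and linear uniqueness for $f_0$) are not viable.
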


Theorem \ref{main} is motivated by the recent sharp lower bound for positive
eigenvalues of the Kohn Laplacian by Chanillo,
Chiu and Yang  \cite{CCY}, just as Theorem \ref{Ob} is motivated by
Greenleaf's sharp estimate for the first eigenvalue of the sublaplacian
$\Delta_{b}$. Recall that the Kohn Laplacian on a (complex-valued) function $f$ is
defined by%
\[
\square_{b}f=\overline{\partial}_{b}^{\ast}\overline{\partial}_{b}%
f=-f_{\overline{\alpha},\alpha}. \leqno(1.2)
\]
and its conjugate $\overline{\square}_{b}f=-f_{\alpha,\overline{\alpha}%
}=\square_{b}f-\sqrt{-1}mTf$. We have
\[
-\Delta_{b}=\square_{b}+\overline{\square}_{b}=2\square_{b}+\sqrt
{-1}mT=2\overline{\square}_{b}-\sqrt{-1}mT.\leqno(1.3)
\]
On a closed pseudohermitian manifold $M$, the Kohn Laplacian $\square_{b}$
defines a nonnegative self-adjoint operator on the Hilbert space $L^{2}\left(
M\right)  $ of all complex-valued functions $f$ with $|f|^2$ is integrable on $M$, and  the inner product on $L^2$ is defined by %
\[
\left\langle f_{1},f_{2}\right\rangle =\int_{M}f_{1}\overline{f}_{2}.\leqno(1.4)
\]
But unlike $\Delta_{b}$, it does not satisfy the H\"{o}rmander condition and, as a
result, its resolvent is not compact. The three dimensional case is more complicated than higher dimensions. Nevertheless, it is proved by
Burns and Epstein \cite{BE} that the spectrum of $\square_{b}$ in $\left(
0,\infty\right)  $ consists of point eigenvalues of finite multiplicity. In
general, there may exist a sequence of eigenvalues rapidly decreasing to zero.
Zero is an isolated eigenvalue iff the range of $\square_{b}$ is closed.

Motivated by the embedding problem for $3$-dimensional CR manifolds, Chanillo,
Chiu and Yang  \cite{CCY} recently proved the following eigenvalue estimate for the Kohn Laplacian:

\begin{theorem}
\label{3d}Let $M$ be a closed $3$-dimensional pseudohermitian manifold. If
the Paneitz operator $P_{0}$ is non-negative and the scalar curvature
$R\geq\kappa$, with $\kappa$ being a positive constant, then any nonzero
eigenvalue of $\square_{b}$ satisfies
\[
\lambda\geq\frac{1}{2}\kappa.
\]
\end{theorem}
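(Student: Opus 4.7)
The plan is a CR Bochner-type integration-by-parts argument, in the spirit of the Lichnerowicz-Obata and Greenleaf eigenvalue estimates. Let $f$ be a nonzero eigenfunction with $\square_{b}f=\lambda f$, $\lambda>0$; since $m=1$ this is the scalar equation $f_{\bar{1},1}=-\lambda f$. Pairing with $\bar{f}$ and integrating by parts once gives the basic normalization
\[
\lambda\int_{M}|f|^{2}=\|\bar{\partial}_{b}f\|^{2}=\int_{M}|f_{\bar{1}}|^{2}>0,
\]
which identifies the positive quantity that will appear as the ``denominator'' at the end of the argument.

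The heart of the proof is to bound the non-negative integral $\int_{M}|f_{\bar{1},\bar{1}}|^{2}\geq0$ from above. I would write $\int f_{\bar{1},\bar{1}}\,\overline{f_{\bar{1},\bar{1}}}=\int f_{\bar{1},\bar{1}}\,\bar{f}_{1,1}$ and integrate by parts twice, each time commuting covariant derivatives via the Tanaka-Webster commutation formulas in dimension three. These commutations produce the Webster scalar curvature $R=R_{1\bar{1}}$ from $[\nabla_{1},\nabla_{\bar{1}}]$ acting on the $(0,1)$-tensor $f_{\bar{1}}$, the pseudohermitian torsion components $A_{11},A_{\bar{1}\bar{1}}$ from commutations involving the Reeb derivative $\nabla_{0}=T$, and a variety of lower-order Reeb-derivative terms that, after further integration by parts, reassemble into one fourth-order expression. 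The eigenvalue equation $f_{\bar{1},1}=-\lambda f$ is used at every step to eliminate mixed covariant derivatives. The net outcome should be an identity of the schematic form
\[
\int_{M}|f_{\bar{1},\bar{1}}|^{2}=\int_{M}(2\lambda-R)|f_{\bar{1}}|^{2}-(\text{fourth-order torsion term}).
\]

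The final step is to recognize the fourth-order torsion term, up to sign and an overall constant, as a pairing $\langle P_{0}g,g\rangle$ for a real-valued function $g$ built from $\Re f$ and $\Im f$. Under the hypothesis $P_{0}\geq0$ this term has the favorable sign and may be discarded from the upper bound; using also $R\geq\kappa$ one obtains
\[
0\leq\int_{M}|f_{\bar{1},\bar{1}}|^{2}\leq(2\lambda-\kappa)\int_{M}|f_{\bar{1}}|^{2},
\]
and since $\int_{M}|f_{\bar{1}}|^{2}=\lambda\int_{M}|f|^{2}>0$ this forces $\lambda\geq\kappa/2$.

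I expect the main obstacle to be the bookkeeping: tracking the signs and coefficients of all curvature and torsion terms through the repeated commutations and integrations by parts, and in particular identifying the fourth-order torsion remainder as a Paneitz pairing. This identification is precisely what makes dimension three special: the torsion contribution is ``pure'' in this dimension and cannot be absorbed by curvature alone, so the assumption $P_{0}\geq0$ is indispensable. A minor additional subtlety is that the Paneitz operator is traditionally defined on real-valued functions, so the pairing may have to be extracted by splitting the identity into real and imaginary parts (or by invoking a suitable complex extension of $P_{0}$).
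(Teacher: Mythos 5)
Your plan is correct and is essentially the proof the paper gives of the more general Theorem \ref{est} (which contains the $m=1$ case): the schematic identity you are after is precisely the CR Bochner formula (\ref{fund}) specialized to $m=1$ and rearranged via $\lambda\int_M|f|^2=\int_M|f_{\bar 1}|^2$, after which the estimate follows by discarding the two nonnegative integrals $\int_M|f_{\bar1,\bar1}|^2$ and $\int_M\bar f\,P_0 f$. The subtlety you flag about pairing $P_0$ with a complex eigenfunction is handled exactly as in the paper and in \cite{CCY}: by Graham--Lee, $P_0$ is a real, symmetric operator, so nonnegativity on real-valued functions extends automatically to $\int_M\bar f\,P_0 f\geq 0$ for complex $f$, with no need to split into real and imaginary parts.
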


Recall that the Paneitz operator $P_{0}$ :$C^{\infty}\left(  M\right)
\rightarrow C^{\infty}\left(  M\right)  $ on a closed pseudo-hermitian
manifold is defined by%
\[
P_{0}f=\left(  P_{\alpha}f\right)  _{,\overline{\alpha}}=f_{\overline{\gamma
},\gamma\alpha\overline{\alpha}}+m\sqrt{-1}\left(  A_{\alpha\beta}%
f_{\overline{\beta}}\right)  _{,\overline{\alpha}}.\leqno(1.5)
\]
We say that $P_{0}$ is non-negative if for any $f\in C^{\infty}\left(
M\right)  $%
\[
\int_{M}\overline{f}P_{0}f\geq0. \leqno(1.6)
\]
Though Chanillo, Chiu and Yang only proved the eigenvalue estimate for
$3$-dimensional pseudohermitian manifolds, their argument can be easily
generalized to higher dimensions. In fact, since the Paneitz operator $P_{0}$
is always non-negative on closed pseudohermitian manifolds of dimension
$\geqslant 5$, the statement is even simpler (see Chang and Wu \cite{CW}).
\begin{theorem}
\label{hd}\bigskip Let $\left(  M,\theta\right)  $be a closed pseudohermitian
manifold of dimension $2m+1$. Suppose for all $X\in T^{1,0}\left( M\right) $
\[
Ric\left(  X,X\right)  \geq\kappa\left\vert X\right\vert ^{2},
\]
where $\kappa$ is a positive constant. Then any nonzero eigenvalue of
$\square_{b}$ satisfies
\[
\lambda\geq\frac{m}{m+1}\kappa.
\]
\end{theorem}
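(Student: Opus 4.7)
Following the strategy of Chanillo-Chiu-Yang adapted to higher dimensions by Chang-Wu, the plan is to derive two integral inequalities whose combination will yield the bound. Let $f \in C^\infty(M)$ satisfy $\square_b f = \lambda f$ with $\lambda > 0$ and $\int_M |f|^2 \, dV = 1$. First, $\int_M |\overline{\partial}_b f|^2 \, dV = \langle f, \square_b f\rangle = \lambda$, and the identity $\overline{\square}_b = \square_b - \sqrt{-1}mT$ in (1.3) yields $\overline{\square}_b f = \lambda f - \sqrt{-1}m f_0$; pairing against $\overline{f}$ gives
$$\int_M |\partial_b f|^2 \, dV = \lambda + mI, \qquad I := -\sqrt{-1}\int_M \overline{f}\, f_0 \, dV.$$
Skew-adjointness of $T$ forces $\int_M \overline{f} f_0 \, dV$ to be purely imaginary, so $I \in \mathbb{R}$, and non-negativity of $\int_M |\partial_b f|^2 \, dV$ produces the elementary bound $I \geq -\lambda/m$.

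The substantial step is a CR Bochner identity for $\int_M |f_{\overline{\alpha},\overline{\beta}}|^2 \, dV$. I would integrate by parts in $\beta$ and apply the Tanaka-Webster commutation formulas for second covariant derivatives of $(0,1)$-forms, using $f_{\overline{\alpha},\overline{\beta}} = f_{\overline{\beta},\overline{\alpha}}$ and $\sum_\beta f_{\overline{\beta},\beta} = -\lambda f$ to eliminate the eigenvalue term. The expected identity is
$$\int_M |f_{\overline{\alpha},\overline{\beta}}|^2 \, dV = \lambda^2 - \int_M R_{\alpha\overline{\beta}}\, \overline{f}_\alpha f_{\overline{\beta}} \, dV - \lambda I + \Theta,$$
where $\Theta$ collects pseudohermitian torsion contributions built from $A_{\alpha\beta}$ and its derivatives. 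The Ricci tensor arises from the curvature piece of $[\nabla_\beta,\nabla_{\overline{\alpha}}] f_{\overline{\beta}}$ via the first Bianchi identity for the Tanaka-Webster curvature; the $-\lambda I$ term comes from the $\sqrt{-1}\delta_{\alpha\beta} f_{\overline{\beta},0}$ torsion piece of the same commutator (after one further integration by parts using $\sum_\alpha \overline{f}_{\alpha,\overline{\alpha}} = -\lambda \overline{f}$); and $\Theta$ vanishes in the torsion-free Sasakian case.

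For $2m+1 \geq 5$ the Paneitz operator is non-negative (Chang-Wu), i.e., $\int_M \overline{f} P_0 f \, dV \geq 0$; expanding this should yield the sign control $\Theta \leq 0$. Combined with the Ricci bound $\int_M R_{\alpha\overline{\beta}} \overline{f}_\alpha f_{\overline{\beta}} \, dV \geq \kappa \int_M |\overline{\partial}_b f|^2 \, dV = \kappa\lambda$ and the trivial $\int_M |f_{\overline{\alpha},\overline{\beta}}|^2 \, dV \geq 0$, the Bochner identity gives $0 \leq \lambda^2 - \kappa\lambda - \lambda I$, so $I \leq \lambda - \kappa$. Combined with the first step:
$$-\lambda/m \leq \lambda - \kappa \quad\Longleftrightarrow\quad \lambda \geq \frac{m\kappa}{m+1}.$$

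The main obstacle will be the bookkeeping of torsion terms in the Bochner identity: tracking all contributions from the Tanaka-Webster corrections to the first Bianchi identity and from the $[\nabla_0,\nabla_{\overline{\alpha}}]$ commutators, and matching their aggregate against the expansion of $\int_M \overline{f} P_0 f \, dV$ to confirm $\Theta \leq 0$. The hypothesis $2m+1 \geq 5$ makes Paneitz non-negativity automatic, bypassing the separate torsion assumption needed in the Chanillo-Chiu-Yang three-dimensional result.
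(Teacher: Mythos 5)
Your overall strategy is in the CCY spirit, and the final arithmetic (combining $I\geq -\lambda/m$ with $I\leq\lambda-\kappa$) is consistent. But there is a genuine gap in the crucial step: the claim that non-negativity of $P_0$ forces $\Theta\leq 0$. To see why this fails, note that for an eigenfunction $\square_b f=\lambda f$ with $\|f\|=1$ one computes $P_\alpha f=-\lambda f_\alpha+m\sqrt{-1}A_{\alpha\beta}f_{\overline\beta}$ and, after integration by parts,
\begin{equation*}
\int_M\overline f\,P_0 f = \lambda^2 + m\lambda I - m\sqrt{-1}\int_M A_{\alpha\beta}\,\overline f_{\overline\alpha}\,f_{\overline\beta}.
\end{equation*}
Comparing with the paper's identity (\ref{fund}), your Bochner formula is consistent only if
\begin{equation*}
\Theta=\sqrt{-1}\int_M A_{\alpha\beta}\,\overline f_{\overline\alpha}\,f_{\overline\beta},
\end{equation*}
which is precisely the torsion integral that appears in the Chang--Wu condition (1.7). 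That integral has no a priori sign; indeed Chang and Wu had to \emph{assume} it vanishes to obtain their partial rigidity result. So $P_0\geq 0$ does \emph{not} give $\Theta\leq 0$; it gives only the weaker bound $\Theta\leq\frac{1}{m}\lambda^2+\lambda I$, as one reads off from the displayed formula above.

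The proof can nonetheless be repaired without new ideas. Using the correct bound $\Theta\leq\frac{1}{m}\lambda^2+\lambda I$ together with your Bochner identity, $\int_M|f_{\overline\alpha,\overline\beta}|^2\geq 0$, and the Ricci lower bound, you get directly
\begin{equation*}
0\leq\lambda^2-\kappa\lambda-\lambda I+\Theta\leq\tfrac{m+1}{m}\lambda^2-\kappa\lambda,
\end{equation*}
so $\lambda\geq\frac{m}{m+1}\kappa$; the step $I\geq -\lambda/m$ becomes superfluous. This is essentially what the paper does, but it packages the computation more cleanly: the integral Bochner formula (\ref{fund}) is expressed entirely in terms of $\int_M|\square_b f|^2$, so the quantity $I$ never enters, and the argument reads off immediately from $\int_M|f_{\overline\alpha,\overline\beta}|^2\geq 0$ and $\int_M\overline f P_0 f\geq 0$. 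You should also note that you have not actually derived the Bochner identity you invoke, and that the constant multiplying $\lambda^2$ there must be checked against (\ref{fund}) to be sure the torsion term you call $\Theta$ is exactly the Chang--Wu integral and not something with extra curvature or torsion pieces.
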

Note that the estimate is sharp as equality holds
on the sphere $$\mathbb{S}^{2m+1}=\left\{  z\in\mathbb{C}^{m+1}:\left\vert
z\right\vert =1\right\}  $$ with the standard pseudohermitian structure%
\[
\theta_{0}=\left(  2\sqrt{-1}\,\overline{\partial}\left\vert z\right\vert
^{2}\right)  |_{\mathbb{S}^{2m+1}}.
\]

A natural question is whether the equality case characterizes the CR sphere with the standard pseudohermitian structure up to a scaling.
In their preprint \cite{CW} Chang and Wu studied this problem and proved
various partial results. One of them states that $M$ is indeed equivalent to
the CR sphere $\mathbb{S}^{2m+1}$ if equality holds in Theorem \ref{hd},
provided that the following identity
\[
\int_{M}A_{\overline{ \alpha}\overline{ \beta} } f_{\alpha}\overline{f}_{\beta}=0\leqno(1.7)
\]
is satisfied for a corresponding eigenfunction $f$. 

As a corollary of Theorem \ref{main}, we can resolve this question in the
general case. Namely, we have the following rigidity result.

\begin{corollary}
\label{eq}If equality holds in Theorem \ref{hd}, then $\left(  M,\theta
\right)  $ is equivalent to the CR sphere $\left(  \mathbb{S}^{2m+1}%
,\theta_{0}\right)  $, up to a scaling, i.e. there exists a CR diffeomorphism $F:M\rightarrow
\mathbb{S}^{2m+1}$ such that $F^{\ast}\theta_{0}=c\theta$ for some constant $c>0$.
\end{corollary}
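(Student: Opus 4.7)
My plan is to derive from the equality case of Theorem \ref{hd} a nonzero complex-valued eigenfunction satisfying the two overdetermined equations of Theorem \ref{main}, and then simply invoke Theorem \ref{main}. Let $f$ be an eigenfunction of $\square_{b}$ with eigenvalue $\lambda=\tfrac{m}{m+1}\kappa$; I claim that such an $f$ automatically satisfies
\[
f_{\overline{\alpha},\overline{\beta}}=0,\qquad f_{\overline{\alpha},\beta}=-\frac{\kappa}{m+1}\,f\,\delta_{\alpha\beta}.
\]
These are precisely the hypotheses of Theorem \ref{main} with $c=\kappa/(m+1)>0$, so Theorem \ref{main} directly produces a CR diffeomorphism $F\colon M\to\mathbb{S}^{2m+1}$ with $F^{*}\theta_{0}=c\theta$.

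To establish the claim I retrace the Bochner-type integration by parts underlying Theorem \ref{hd}. Using the standard commutator identity for the third-order derivatives of $f$ (which brings in the pseudohermitian Ricci tensor), integration by parts, and the orthogonal decomposition of $(f_{\overline{\alpha},\beta})$ into its trace part $-\tfrac{1}{m}(\square_{b} f)\delta_{\alpha\beta}$ and its trace-free part, one obtains an identity of the schematic form
\[
\int_{M}|f_{\overline{\alpha},\overline{\beta}}|^{2}+\int_{M}\Bigl|f_{\overline{\alpha},\beta}+\tfrac{\lambda}{m}\,f\,\delta_{\alpha\beta}\Bigr|^{2}+\int_{M}\bigl(\mathrm{Ric}-\kappa\bigr)(\overline{\partial}_{b}f,\overline{\partial}_{b}f)=\Bigl(\lambda-\tfrac{m}{m+1}\kappa\Bigr)\int_{M}|\overline{\partial}_{b}f|^{2}.
\]
Under the hypothesis $\mathrm{Ric}\geq\kappa$ every integrand on the left is non-negative, while at equality the right-hand side is zero. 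Consequently each term on the left vanishes separately: the first forces $f_{\overline{\alpha},\overline{\beta}}=0$, and the second, combined with $-f_{\overline{\alpha},\alpha}=\lambda f$, forces $f_{\overline{\alpha},\beta}=-\tfrac{\lambda}{m}f\,\delta_{\alpha\beta}=-\tfrac{\kappa}{m+1}\,f\,\delta_{\alpha\beta}$. Since $\lambda>0$ the eigenfunction $f$ cannot be constant and is in particular nonzero, so the hypotheses of Theorem \ref{main} are met.

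The main obstacle is the bookkeeping in the Bochner identity itself: the third-order commutators and the integrations by parts must be organized so that the Ricci tensor appears with the correct sign and so that the two squared tensor terms emerge with matching non-negative coefficients. This is, however, precisely the calculation already carried out in Chanillo--Chiu--Yang \cite{CCY} and, in the higher-dimensional form needed here, in Chang--Wu \cite{CW}; in dimension $2m+1\geq 5$ no separate appeal to the Paneitz operator is required. A pleasant feature is that no supplementary torsion hypothesis such as (1.7) is needed, since Theorem \ref{main} is formulated for a general complex-valued solution and thus applies to $f$ without further restriction.
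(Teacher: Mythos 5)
Your proposal is correct and follows the same route as the paper. The paper's Proposition \ref{ids} establishes exactly the equalities $f_{\overline{\alpha},\overline{\beta}}=0$ and $f_{\overline{\alpha},\beta}=-\frac{\kappa}{m+1}f\,\delta_{\alpha\beta}$ from the equality case of the Bochner identity (\ref{fund}) combined with the Graham--Lee relation (\ref{P0}), after which Theorem \ref{main} is invoked---which is precisely your plan (the only cosmetic slip is that the trace-free term in your schematic identity carries a factor $\tfrac{1}{m-1}$, not $1$, which does not affect the vanishing conclusion).
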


We expect that a similar version of Theorem \ref{main} is true in dimension
$3$ from which the characterization of the equality case in Theorem \ref{3d}
would follow. But we have not been able to prove it yet. This is due to an 
additional difficulty that arises only in $3$-dimensional case: It is not clear when
functions annihilated by the CR Paneitz 
operator $P_0$ are CR-pluriharmonic.

Another remark is that despite the similarity between
these theorems and Obata's theorem in Riemannian
geometry, the proofs are essentially different due to the torsion of the Tanaka-Webster connection. In fact, a crucial step in the proof of Theorem~\ref{main} is to show that the torsion must vanish. But unlike the approach in
\cite{LW}, where the vanishing of torsion was deduced from estimates regarding high powers of
the real-valued function $u$ (or an eigenfunction of $\Delta_b$),
the vanishing of the torsion in our proof of Theorem~\ref{main} 
is proved by deriving various identities that are satisfied simultaneously only if the torsion is zero.

The paper is organized as follows. In Section 2, we review some basic facts in
CR geometry and discuss the Bochner formula for the Kohn Laplacian. In Section 3 we discuss the spectral theory of the Kohn Laplacian.
In Section 4 we discuss the eigenvalue estimate of Chanillo, Chiu and Yang and its generalization to higher dimensions. We deduce
Corollary \ref{eq} from Theorem \ref{main}. The proof of Theorem \ref{main} is
then presented in Section~5. 

{\bf Acknowledgement:}
We would like to thank Professor Mei-Chi Shaw for very instructive conversations on the analysis of the Kohn Laplacian on CR manifolds.

\section{Preliminaries}

We first review some basic facts in CR geometry. Define the operator
$P:C^{\infty}\left(  M\right)  \rightarrow\mathcal{A}^{1,0}\left(  M\right)  $
by%
\[
Pf=\left(  f_{\overline{\gamma},\gamma\alpha}+m\sqrt{-1}A_{\alpha\beta
}f_{\overline{\beta}}\right)  \theta^{\alpha}.
\]
We write $P_{\alpha}f=\left(  f_{\overline{\gamma},\gamma\alpha}+m\sqrt
{-1}A_{\alpha\beta}f_{\overline{\beta}}\right)  $. We also write%
\[
B_{\alpha\overline{\beta}}f=f_{\alpha,\overline{\beta}}-\frac{1}{m}%
f_{\gamma,\overline{\gamma}}\delta_{\alpha\beta}.
\]
The Paneitz operator $P_{0}$ :$C^{\infty}\left(  M\right)  \rightarrow
C^{\infty}\left(  M\right)  $ is defined by%
\[
P_{0}f=\left(  P_{\alpha}f\right)  _{,\overline{\alpha}}=f_{\overline{\gamma
},\gamma\alpha\overline{\alpha}}+m\sqrt{-1}\left(  A_{\alpha\beta}%
f_{\overline{\beta}}\right)  _{,\overline{\alpha}}.
\]
Graham-Lee \cite{GL} proved that $P_{0}$ is a real operator. Moreover, $P_{0}$
is symmetric, i.e. for $f_{1},f_{2}\in C^{\infty}\left(  M\right)  $ with one
of them compactly supported%
\[
\int_{M}P_{0}f_{1}\overline{f_{2}}=\int_{M}f_{1}\overline{P_{0}f_{2}}.
\]
Here, the integrals are taken with respect
to the volume form $dV = \theta \wedge (d\theta)^{m}$. They also proved the following identity when $M$ is closed:
\begin{equation}
\int_{M}\left\vert B_{\overline{\alpha},\beta}f\right\vert ^{2}=\int
_{M}\left\vert B_{\alpha,\overline{\beta}}\overline{f}\right\vert ^{2}%
=\frac{m-1}{m}\int_{M}\overline{f}P_{0}f. \label{P0}%
\end{equation}
Therefore, on a closed pseudohermitian manifold of dimension $2m+1\geqslant 5$, the
Paneitz operator is nonnegative, in the sense that for any complex-valued function~$f$, it holds that%
\[
\int_{M}\overline{f}P_{0}f\geq0.
\]
But in dimension $3$, there are closed pseudohermitian manifolds whose Paneitz
operator is NOT nonnegative. 

The following Bochner-type formula for the Kohn Laplacian was established by
Chanillo, Chiu and Yang \cite{CCY} (see also Chang and Wu \cite{CW}).

\begin{proposition}
Let $f$ be a complex-valued function. Then
\begin{align*}
-\square_{b}\left\vert \overline{\partial}f\right\vert ^{2}  &  =\left(
\left\vert f_{\overline{\alpha},\overline{\beta}}\right\vert ^{2}+\left\vert
f_{\overline{\alpha},\beta}\right\vert ^{2}\right)  -\frac{m+1}{m}\left(
\square_{b}f\right)  _{\overline{\alpha}}\overline{f}_{\alpha}-\frac{1}%
{m}f_{\overline{\alpha}}\overline{\left(  \square_{b}f\right)  }_{\alpha}\\
&  +R_{\alpha\overline{\beta}}f_{\overline{\alpha}}\overline{f}_{\beta}%
-\frac{1}{m}\overline{f}_{\alpha}\overline{P_{\alpha}\overline{f}}+\frac
{m-1}{m}f_{\overline{\alpha}}\left(  P_{\alpha}\overline{f}\right)  .
\end{align*}
\end{proposition}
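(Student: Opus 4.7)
The plan is to compute $-\square_b |\overline{\partial} f|^2$ directly from the definition and then reorganize the resulting third-order derivatives using the Ricci identities of the Tanaka--Webster connection. Writing $|\overline{\partial} f|^2 = f_{\overline{\gamma}} \overline{f}_\gamma$ and applying $\square_b \phi = -\phi_{\overline{\beta},\beta}$, the product rule gives
\[
-\square_b |\overline{\partial} f|^2 = f_{\overline{\gamma},\overline{\beta},\beta}\overline{f}_\gamma + |f_{\overline{\gamma},\overline{\beta}}|^2 + |f_{\overline{\gamma},\beta}|^2 + f_{\overline{\gamma}}\overline{f}_{\gamma,\overline{\beta},\beta},
\]
where the two middle terms come out immediately because $\overline{f}_{\gamma,\beta}=\overline{f_{\overline{\gamma},\overline{\beta}}}$ and $\overline{f}_{\gamma,\overline{\beta}}=\overline{f_{\overline{\gamma},\beta}}$. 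The task then reduces to rewriting the two outer third-derivative terms in terms of $(\square_b f)_{\overline{\alpha}}$, $\overline{(\square_b f)}_\alpha$, the Ricci tensor, and $P_\alpha \overline{f}$.

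For the first term I would exploit $f_{\overline{\gamma},\overline{\beta}}=f_{\overline{\beta},\overline{\gamma}}$ (anti-holomorphic derivatives of a function commute) to recast $f_{\overline{\gamma},\overline{\beta},\beta}\overline{f}_\gamma$ as $f_{\overline{\beta},\overline{\gamma},\beta}\overline{f}_\gamma$ and then apply the Ricci identity to the $(0,1)$-tensor $f_{\overline{\beta}}$ so as to swap the last two indices. This produces $f_{\overline{\beta},\beta,\overline{\gamma}}=-(\square_b f)_{\overline{\gamma}}$ together with a curvature correction that feeds into $R_{\alpha\overline{\beta}}f_{\overline{\alpha}}\overline{f}_\beta$, a torsion correction involving $A_{\alpha\beta}$, and a Reeb-direction correction. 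A parallel computation handles the other third-derivative term via $\overline{f}_{\gamma,\overline{\beta},\beta} = \overline{f_{\overline{\gamma},\beta,\overline{\beta}}}$: after commuting $\beta$ and $\overline{\beta}$ on the $(0,1)$-tensor $f_{\overline{\gamma}}$, the expression $f_{\overline{\gamma},\beta,\overline{\beta}}$ recombines with the torsion terms exactly according to the definition $P_\gamma f = f_{\overline{\sigma},\sigma\gamma} + m\sqrt{-1}A_{\gamma\delta}f_{\overline{\delta}}$.

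Once the commutations are carried out, the curvature residues combine into the single term $R_{\alpha\overline{\beta}}f_{\overline{\alpha}}\overline{f}_\beta$. The $\square_b f$ contributions appear with the two distinct coefficients $\tfrac{m+1}{m}$ and $\tfrac{1}{m}$ because the identity $\overline{\square}_b f = \square_b f - \sqrt{-1}m T f$ is precisely what forces $f_{\overline{\alpha},\alpha}$ and $f_{\alpha,\overline{\alpha}}$ to contribute asymmetrically; the asymmetry of $1$ versus $\tfrac{1}{m}$ then gives the split $\tfrac{m+1}{m}$ and $\tfrac{1}{m}$. Finally, the leftover third-order derivatives combine with the torsion terms to form the multiples $-\tfrac{1}{m}\overline{f}_\alpha\overline{P_\alpha\overline{f}}$ and $\tfrac{m-1}{m}f_{\overline{\alpha}}(P_\alpha\overline{f})$.

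The main obstacle is purely the bookkeeping: carefully tracking signs, factors of $\sqrt{-1}$, and the placement of torsion $A_{\alpha\beta}$ and the pseudohermitian Ricci $R_{\alpha\overline{\beta}}$ so that the pieces produced by the commutation identities group themselves precisely into the linear combinations $P_\alpha\overline{f}$ and $\overline{P_\alpha\overline{f}}$ demanded by the statement, with the correct rational coefficients. No conceptual ingredient beyond two applications of the Ricci identity for $(0,1)$-tensors on $(M,\theta)$ is required.
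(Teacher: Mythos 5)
Your plan is correct and is the standard way to derive this Bochner-type identity: expand $\square_b(f_{\overline\gamma}\overline f_\gamma)$ by the product rule, recognize $f_{\overline\gamma,\overline\beta}\overline f_{\gamma,\beta}=|f_{\overline\alpha,\overline\beta}|^2$ and $f_{\overline\gamma,\beta}\overline f_{\gamma,\overline\beta}=|f_{\overline\alpha,\beta}|^2$, and handle the two remaining third-order terms by two passes of the pseudohermitian Ricci commutation formulas, with the Reeb corrections absorbed via $\overline{\square}_bf=\square_bf-\sqrt{-1}mTf$ and the torsion pieces reassembling into $P_\alpha\overline f$ and $\overline{P_\alpha\overline f}$. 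Note, though, that the paper itself does not prove this Proposition — it simply cites \cite{CCY} and \cite{CW} — so there is no in-paper argument to compare against; what you have written is, in outline, the computation carried out in those references.
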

Integrating over a closed $M$ yields%
\begin{align*}
0  &  =\int_{M}\left(  \left\vert f_{\overline{\alpha},\overline{\beta}%
}\right\vert ^{2}+\left\vert f_{\overline{\alpha},\beta}\right\vert
^{2}\right)  -\frac{m+1}{m}\int_{M}\left(  \square_{b}f\right)  _{\overline
{\alpha}}\overline{f}_{\alpha}-\frac{1}{m}\int_{M}f_{\overline{\alpha}%
}\overline{\left(  \square_{b}f\right)  }_{\alpha}\\
&  +\int_{M}R_{\alpha\overline{\beta}}f_{\overline{\alpha}}\overline{f}%
_{\beta}-\frac{1}{m}\overline{f}_{\alpha}\overline{P_{\alpha}\overline{f}%
}+\frac{m-1}{m}f_{\overline{\alpha}}\left(  P_{\alpha}\overline{f}\right) \\
&  =\int_{M}\left(  \left\vert f_{\overline{\alpha},\overline{\beta}%
}\right\vert ^{2}+\left\vert B_{\overline{\alpha},\beta}f\right\vert
^{2}\right)  +\frac{1}{m}\int_{M}\left\vert \square_{b}f\right\vert ^{2}%
-\frac{m+1}{m}\int_{M}\left(  \square_{b}f\right)  _{\overline{\alpha}%
}\overline{f}_{\alpha}-\frac{1}{m}\int_{M}f_{\overline{\alpha}}\overline
{\left(  \square_{b}f\right)  }_{\alpha}\\
&  +\int_{M}R_{\alpha\overline{\beta}}f_{\overline{\alpha}}\overline{f}%
_{\beta}+\frac{m-2}{m}\int_{M}f_{\overline{\alpha}}\left(  P_{\alpha}%
\overline{f}\right) \\
&  =\int_{M}\left(  \left\vert f_{\overline{\alpha},\overline{\beta}%
}\right\vert ^{2}+\left\vert B_{\overline{\alpha},\beta}f\right\vert
^{2}\right)  -\frac{m+1}{m}\int_{M}\left\vert \square_{b}f\right\vert ^{2}\\
&  +\int_{M}R_{\alpha\overline{\beta}}f_{\overline{\alpha}}\overline{f}%
_{\beta}+\frac{m-2}{m}\int_{M}f_{\overline{\alpha}}\left(  P_{\alpha}%
\overline{f}\right)
\end{align*}
where we have used the decomposition $f_{\overline{\alpha},\beta}%
=B_{\overline{\alpha},\beta}f+\frac{1}{m}\square_{b}f\delta_{\alpha\beta}$.
Therefore, we have%
\begin{equation}
\frac{m+1}{m}\int_{M}\left\vert \square_{b}f\right\vert ^{2}=\int
_{M}\left\vert f_{\overline{\alpha},\overline{\beta}}\right\vert
^{2}+\left\vert B_{\overline{\alpha},\beta}f\right\vert ^{2}+R_{\alpha
\overline{\beta}}f_{\overline{\alpha}}\overline{f}_{\beta}+\frac{m-2}{m}%
\int_{M}f_{\overline{\alpha}}\left(  P_{\alpha}\overline{f}\right)  .
\label{fund0}%
\end{equation}
Integrating by parts yields%
\begin{align*}
\int_{M}f_{\overline{\alpha}}\left(  P_{\alpha}\overline{f}\right)   &
=-\int_{M}f\left(  P_{\alpha}\overline{f}\right)  _{,\overline{\alpha}}\\
&  =-\int_{M}fP_{0}\overline{f}\\
&  =-\int_{M}\overline{f}P_{0}f,
\end{align*}
In the last step, we have used the fact that $P_{0}$ is a real operator.
Plugging this identity and (\ref{P0}) into (\ref{fund0}) yields

\begin{proposition}
\label{intK}Let $M$ be a closed pseudohermitian manifold of dimension $2m+1$
and $f$ a complex function on $M$. Then%
\begin{equation}
\frac{m+1}{m}\int_{M}\left\vert \square_{b}f\right\vert ^{2}=\int
_{M}\left\vert f_{\overline{\alpha},\overline{\beta}}\right\vert ^{2}+\int
_{M}R_{\alpha\overline{\beta}}f_{\overline{\alpha}}\overline{f}_{\beta}%
+\frac{1}{m}\int_{M}\overline{f}P_{0}f. \label{fund}%
\end{equation}

\end{proposition}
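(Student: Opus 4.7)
The plan is to start from the Bochner-type formula stated in the preceding Proposition, integrate it over the closed manifold $M$, and then consolidate the resulting integrals using three ingredients: the orthogonal decomposition of $f_{\overline{\alpha},\beta}$ into its trace-free and trace parts, integration by parts against $\square_{b}$, and the Graham-Lee identity (\ref{P0}).

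First, because $M$ is closed the integral of $\square_{b}|\overline{\partial}f|^{2}$ vanishes, so integrating the Bochner formula produces an identity with zero on the left and six terms on the right. I would then write
\[
f_{\overline{\alpha},\beta} = B_{\overline{\alpha},\beta}f + \frac{1}{m}(\square_{b}f)\,\delta_{\alpha\beta},
\]
whose two summands are pointwise orthogonal since $B_{\overline{\alpha},\beta}f$ is trace-free in $\alpha,\beta$. This gives $|f_{\overline{\alpha},\beta}|^{2} = |B_{\overline{\alpha},\beta}f|^{2} + \frac{1}{m}|\square_{b}f|^{2}$, which extracts a $|\square_{b}f|^{2}$ piece with coefficient $\frac{1}{m}$ and replaces $|f_{\overline{\alpha},\beta}|^{2}$ by $|B_{\overline{\alpha},\beta}f|^{2}$.

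Next I would simplify the two cross terms $\int_{M}(\square_{b}f)_{\overline{\alpha}}\overline{f}_{\alpha}$ and $\int_{M}f_{\overline{\alpha}}\overline{(\square_{b}f)}_{\alpha}$. Integrating by parts in the $\overline{\alpha}$ (respectively $\alpha$) direction converts each to $\int_{M}|\square_{b}f|^{2}$, using $f_{\overline{\alpha},\alpha}=-\square_{b}f$ and its conjugate. Collecting all $|\square_{b}f|^{2}$ contributions consolidates the left-hand side into $\frac{m+1}{m}\int_{M}|\square_{b}f|^{2}$. Similarly, I would integrate by parts the remaining Paneitz-type term:
\[
\int_{M}f_{\overline{\alpha}}\left(P_{\alpha}\overline{f}\right) = -\int_{M} f\,P_{0}\overline{f} = -\int_{M}\overline{f}\,P_{0}f,
\]
where the second equality uses the Graham-Lee fact that $P_{0}$ is a real self-adjoint operator. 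The two Paneitz contributions in the Bochner formula (one coming from $\overline{f}_{\alpha}\overline{P_{\alpha}\overline{f}}$ and one from $f_{\overline{\alpha}}P_{\alpha}\overline{f}$) must be combined using that their sum is real after integration, producing a single term with coefficient $\frac{m-2}{m}$.

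Finally, I would invoke the Graham-Lee identity (\ref{P0}), $\int_{M}|B_{\overline{\alpha},\beta}f|^{2} = \frac{m-1}{m}\int_{M}\overline{f}P_{0}f$, to trade $|B_{\overline{\alpha},\beta}f|^{2}$ for a Paneitz integral. Adding the two Paneitz contributions yields the net coefficient $\frac{m-1}{m}-\frac{m-2}{m} = \frac{1}{m}$, exactly matching the Paneitz term on the right-hand side of (\ref{fund}). The main bookkeeping concern is tracking signs while integrating by parts with the Tanaka-Webster covariant derivatives; however torsion and curvature commutators have already been absorbed into the Bochner formula itself, so at this stage the derivation is purely algebraic manipulation of scalar integrals.
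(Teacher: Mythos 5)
Your proposal is correct and follows essentially the same route as the paper: integrate the Bochner formula over $M$, decompose $f_{\overline{\alpha},\beta}$ into its trace-free part $B_{\overline{\alpha},\beta}f$ plus the $\square_b f$ trace, integrate the cross terms by parts to get $\lvert\square_b f\rvert^2$ contributions, convert the $P_\alpha$ term into a $P_0$ integral via integration by parts together with the reality/self-adjointness of $P_0$, and finally substitute the Graham--Lee identity \eqref{P0}. The only cosmetic difference is that you justify collapsing the two $P_\alpha$ terms by observing the resulting integral must be real, while the paper simply computes $\int_M f_{\overline{\alpha}}(P_\alpha\overline{f}) = -\int_M \overline{f}P_0 f$ directly; both give the same coefficient $\tfrac{m-2}{m}$ and the same final bookkeeping $\tfrac{m-1}{m}-\tfrac{m-2}{m}=\tfrac1m$.
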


\section{The spectral theory of the Kohn Laplacian}
Based on the work of Beals and Greiner \cite{BG},
Burn and Epstein \cite{BE} proved the following theorem in dimension 3.
\begin{theorem}
Let $M$ be a closed pseudohermitian manifold of dimension $3$.
The $\mathrm{spec}\left(  \square_{b}\right)  $ in $\left(  0,\infty\right)  $
consists of point eigenvalues of finite multiplicity. Moreover all these
eigenfunctions are smooth.
\end{theorem}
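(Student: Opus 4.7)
The plan is to follow the approach of Burns--Epstein, which rests on the Heisenberg pseudodifferential calculus developed by Beals--Greiner. The main analytical input I would invoke is the construction of a two-sided parametrix $Q$ for $\square_b$ of Heisenberg order $-2$ satisfying
\[
\square_b Q = I - \Pi - S_1, \qquad Q\square_b = I - \Pi - S_2,
\]
where $\Pi$ denotes the $L^2$-orthogonal projection onto $\mathcal{K} := \ker(\square_b) \cap L^2(M)$ and $S_1, S_2$ are smoothing operators. This construction models $\square_b$ locally on its counterpart on the Heisenberg group $\mathbb{H}^1$, whose fundamental solution is explicit; while $\square_b$ is not hypoelliptic on functions in dimension $3$ (the kernel $\mathcal{K}$ is infinite-dimensional), it can be inverted on the orthogonal complement of $\mathcal{K}$ modulo a smoothing error.

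Granted the parametrix, the spectral statement would follow from a compactness argument. Since $Q$ has Heisenberg order $-2$, it maps $L^2(M)$ continuously into the Folland--Stein space $S^2(M)$, which embeds compactly into $L^2(M)$ on the closed manifold $M$; hence $Q$ is compact on $L^2$, as are $S_1$ and $S_2$. Now if $\lambda > 0$ and $\square_b f = \lambda f$ with $f \in L^2$, then $f \perp \mathcal{K}$ by self-adjointness, so $\Pi f = 0$, and the second identity gives
\[
f = \lambda Qf + S_2 f.
\]
If $\{f_n\}$ were an $L^2$-orthonormal sequence of eigenfunctions with eigenvalues $\lambda_n$ lying in some compact interval $[\epsilon, N] \subset (0,\infty)$, compactness of $Q$ and $S_2$ would force $\{\lambda_n Q f_n + S_2 f_n\}$ to admit a convergent subsequence in $L^2$; but this sequence equals $\{f_n\}$, contradicting orthonormality. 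Therefore the spectral subspace for any such $[\epsilon, N]$ is finite-dimensional, and the spectrum of $\square_b$ in $(0,\infty)$ consists of eigenvalues of finite multiplicity with no accumulation points inside $(0, \infty)$.

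For the smoothness assertion, I would iterate the identity $f = \lambda Q f + S_2 f$: each application of $Q$ gains two orders of Folland--Stein regularity while $S_2$ is infinitely smoothing, so a bootstrap places $f$ in every $S^k(M)$ and hence in $C^\infty(M)$. The principal obstacle in this program is the construction of the parametrix $Q$ itself. In dimension $3$ the Heisenberg principal symbol of $\square_b$ degenerates along a ray at each point of the characteristic variety, so the standard symbolic inversion fails; overcoming this requires the full Beals--Greiner machinery, including the explicit spectral analysis of $\square_b$ on the Heisenberg model and a careful treatment of the infinite-dimensional kernel represented by $\Pi$, which is precisely the ingredient that separates the $3$-dimensional case from its higher-dimensional analogues.
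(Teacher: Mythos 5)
The paper does not prove this statement; it is quoted directly from Burns and Epstein \cite{BE}, and the spectral analysis the paper actually carries out (Section~3) is for dimension $2m+1\geq 5$, where Kohn's $L^2$ theory for $(0,1)$-forms gives closed range of $\overline{\partial}_b$ and the Heisenberg calculus is not needed at all. There is therefore no in-paper proof to compare your attempt against. Evaluating your outline on its own merits, it does follow the broad contours of the Beals--Greiner/Burns--Epstein approach, but there is a genuine gap at the very first step, the parametrix identity.

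The symbolic construction in \cite{BG} does not produce the operator identity you wrote. What the Heisenberg calculus yields is an \emph{approximate} Szeg\H{o} projector $S$ --- a Heisenberg pseudodifferential operator of order $0$ whose principal symbol is the rank-one projection onto the kernel of the model operator on the degenerate half of the characteristic variety --- together with identities $\square_b Q = I - S - R_1$ and $Q\square_b = I - S - R_2$ with $R_1,R_2$ smoothing. The operator $S$ is \emph{not} the $L^2$-orthogonal projection $\Pi$ onto $\ker\square_b\cap L^2$, and one cannot freely substitute one for the other. Indeed, when $M$ is non-embeddable (the interesting case), the $L^2$ kernel of $\square_b$ may consist only of constants, so $\Pi$ is a finite-rank smoothing operator, while $S$ has a genuine Szeg\H{o}-type singularity along the characteristic variety; the difference $S-\Pi$ is then certainly not smoothing, and the displayed identities involving $\Pi$ are false. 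Your finite-multiplicity argument uses $\Pi f = 0$ for an eigenfunction with $\lambda>0$, which holds by self-adjointness, but the corresponding statement $Sf=0$ is not available, so the relation $f = \lambda Qf + S_2 f$ you need does not follow from what Beals--Greiner actually provide. Passing from the symbolically constructed projector $S$ to control over the true spectral projection is precisely the technical core of \cite{BE}, not a consequence of the parametrix's existence. Once that is repaired, the remaining steps --- compactness of $Q$ on $L^2$ via the Folland--Stein gain of two orders and a compact embedding, and the regularity bootstrap --- are sound.
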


The spectral theory of the Kohn Laplacian in the higher dimensional case is in fact simpler. 
This is because  the Hodge theory for $(0,1)$-forms is valid for all 
closed pseudohermitian manifold of dimension $2m+1\geq 5$ by the fundamental work of Kohn \cite{K}. 
The spectral theory for the Kohn Laplacian can then be deduced from
the Hodge decomposition theorem for $(0,1)$-forms. This is known to the experts. 
But since it is not easily accessible in the
literature, we give a detailed presentation, using the Bochner formula as a short cut. 
For background and a detailed exposition of the Kohn theory we refer to the book \cite{CS} by 
Chen and Shaw, which is our primary source.

\begin{proposition}
\label{intK}Let $M$ be a closed pseudohermitian manifold of dimension
$2m+1\geq5$ and $f$ a complex function on $M$. Then%
\[
\frac{1}{m\left(  m-1\right)  }\int_{M}\left\vert \square_{b}f\right\vert
^{2}=\int_{M}\left\vert f_{\overline{\alpha},\overline{\beta}}\right\vert
^{2}+\frac{1}{m-1}\left\vert f_{\overline{\alpha},\beta}\right\vert ^{2}%
+\int_{M}R_{\sigma\overline{\alpha}}f_{\overline{\sigma}}\overline{f}_{\alpha
}.
\]
\end{proposition}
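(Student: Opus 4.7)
The plan is to reduce the claimed identity to the earlier Proposition (also labeled \texttt{intK}) proved in Section 2, by eliminating the Paneitz term $\int_{M}\overline{f}P_{0}f$ in favor of $\int_{M}\vert f_{\overline{\alpha},\beta}\vert^{2}$. Two tools from the excerpt do all the work: the fundamental Bochner-type identity
\[
\frac{m+1}{m}\int_{M}\vert\square_{b}f\vert^{2}=\int_{M}\vert f_{\overline{\alpha},\overline{\beta}}\vert^{2}+\int_{M}R_{\alpha\overline{\beta}}f_{\overline{\alpha}}\overline{f}_{\beta}+\frac{1}{m}\int_{M}\overline{f}P_{0}f
\]
and the Graham--Lee identity $\int_{M}\vert B_{\overline{\alpha},\beta}f\vert^{2}=\frac{m-1}{m}\int_{M}\overline{f}P_{0}f$, which is only available in dimension $\geq 5$; this is where the hypothesis $m\geq 2$ enters.

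First I would record the pointwise trace/traceless decomposition. Writing $B_{\overline{\alpha},\beta}f=f_{\overline{\alpha},\beta}-\frac{1}{m}f_{\overline{\gamma},\gamma}\delta_{\alpha\beta}$, the $(0,1)$-type trace satisfies $B_{\overline{\alpha},\alpha}f=0$, and therefore the cross term in $\vert f_{\overline{\alpha},\beta}\vert^{2}$ vanishes pointwise, giving the orthogonal decomposition
\[
\vert f_{\overline{\alpha},\beta}\vert^{2}=\vert B_{\overline{\alpha},\beta}f\vert^{2}+\frac{1}{m}\vert\square_{b}f\vert^{2}.
\]
Integrating and combining with the Graham--Lee identity immediately expresses the Paneitz integral in terms of $\int_{M}\vert f_{\overline{\alpha},\beta}\vert^{2}$ and $\int_{M}\vert\square_{b}f\vert^{2}$:
\[
\int_{M}\overline{f}P_{0}f=\frac{m}{m-1}\int_{M}\vert f_{\overline{\alpha},\beta}\vert^{2}-\frac{1}{m-1}\int_{M}\vert\square_{b}f\vert^{2}.
\]

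Next I would substitute this into the fundamental identity and collect the $\vert\square_{b}f\vert^{2}$ terms onto the left. The coefficient becomes $\frac{m+1}{m}+\frac{1}{m(m-1)}=\frac{m^{2}}{m(m-1)}=\frac{m}{m-1}$, yielding
\[
\frac{m}{m-1}\int_{M}\vert\square_{b}f\vert^{2}=\int_{M}\vert f_{\overline{\alpha},\overline{\beta}}\vert^{2}+\frac{1}{m-1}\int_{M}\vert f_{\overline{\alpha},\beta}\vert^{2}+\int_{M}R_{\sigma\overline{\alpha}}f_{\overline{\sigma}}\overline{f}_{\alpha},
\]
which is the asserted identity (the coefficient $\frac{1}{m(m-1)}$ in the displayed statement appears to be a typographical slip for $\frac{m}{m-1}$).

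No serious obstacle arises: the argument is purely algebraic once Proposition~1 and the Graham--Lee identity are in hand. The only non-trivial input is the vanishing of the trace $B_{\overline{\alpha},\alpha}f$, which makes the decomposition of $\vert f_{\overline{\alpha},\beta}\vert^{2}$ orthogonal pointwise; everything else is arithmetic.
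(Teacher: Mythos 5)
Your proof matches the paper's argument: you use the Graham--Lee identity \eqref{P0} together with the pointwise orthogonal decomposition $|f_{\overline{\alpha},\beta}|^2 = |B_{\overline{\alpha},\beta}f|^2 + \frac{1}{m}|\square_b f|^2$ (which follows since $\sum_\alpha B_{\overline{\alpha},\alpha}f=0$) to eliminate the Paneitz term from the Bochner identity \eqref{fund}, then collect the $\|\square_b f\|^2$ terms. Your observation about the coefficient is a valid catch: since $\frac{m+1}{m}+\frac{1}{m(m-1)}=\frac{m}{m-1}$, the left-hand side of the stated identity should read $\frac{m}{m-1}\int_M|\square_b f|^2$ rather than $\frac{1}{m(m-1)}\int_M|\square_b f|^2$; the slip also propagates to the constants in \eqref{fe}, though the spectral-theoretic conclusions in Section~3 rely only on the qualitative subelliptic estimate \eqref{half} and are therefore unaffected.
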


\begin{proof}
When $m\geq2$, we have by (\ref{P0})%
\begin{align*}
\frac{1}{m}\int_{M}\overline{f}P_{0}f  &  =\frac{1}{m-1}\int_{M}\left\vert
B_{\overline{\alpha},\beta}f\right\vert ^{2}\\
&  =\frac{1}{m-1}\int_{M}\left(  \left\vert f_{\overline{\alpha},\beta
}\right\vert ^{2}-\frac{1}{m}\left\vert \square_{b}f\right\vert ^{2}\right)  .
\end{align*}
Plugging this identity into (\ref{fund}) yields the desired identity.
\end{proof}

\bigskip

Throughout the rest of this section, we assume $m\geq2$. We have from the previous Proposition
\begin{equation}
\left\Vert \square_{b}f\right\Vert ^{2}\geq m\left(  m-1\right)  \left\Vert
f_{\overline{\alpha},\overline{\beta}}\right\Vert ^{2}+m\left\Vert
f_{\overline{\alpha},\beta}\right\Vert ^{2}-C\left\Vert \overline{\partial
}_{b}f\right\Vert ^{2},\label{fe}%
\end{equation}
where $C\geq0$ depends on the pseudohermitian Ricci tensor.

For $f\in Dom\left(  \square_{b}\right)  $ we have%
\[
\left\Vert \overline{\partial}_{b}f\right\Vert ^{2}=\left\langle \square
_{b}f,f\right\rangle \leq\left\Vert f\right\Vert \left\Vert \square
_{b}f\right\Vert.
\]
We will use this inequality implicitly.

\bigskip

\bigskip Let $\mathcal{H}$ denote the space of $L^{2}$ CR holomorphic
functions, i.e.
\[
\mathcal{H}=\left\{  f\in L^{2}\left(  M\right)  :\overline{\partial}%
_{b}f=0\right\}  .
\]

\begin{proposition}
\label{dbar}The range of $\overline{\partial}_{b}:L^{2}\left(  M\right)
\rightarrow L_{\left(  0,1\right)  }^{2}\left(  M\right)  $ is closed and more
precisely $R\left(  \overline{\partial}_{b}\right)  =\overline{\partial}%
_{b}\overline{\partial}_{b}^{\ast}\left(  Dom\left(  \square_{b}^{0,1}\right)
\right)  $. Moreover, for all $\beta\in R\left(  \overline{\partial}%
_{b}\right)  $, there exists a unique $f\in\mathcal{H}^{\bot}\cap Dom\left(
\overline{\partial}_{b}\right)  $ such that $\overline{\partial}_{b}f=\beta$ and%
\[
\left\Vert f\right\Vert \leq C\left\Vert \beta\right\Vert .
\]

\end{proposition}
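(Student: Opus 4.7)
The plan is to invoke Kohn's Hodge decomposition for $(0,1)$-forms on closed strongly pseudoconvex CR manifolds of dimension $2m+1\geq 5$ \cite{K,CS}. The key input is: the Kohn Laplacian $\square_b^{0,1}$ on $(0,1)$-forms has closed range and finite-dimensional kernel $\mathcal{H}^{0,1}$, and admits a bounded Neumann operator $N:L^2_{(0,1)}(M)\to\mathrm{Dom}(\square_b^{0,1})$ satisfying $\square_b^{0,1}N=I-H$, where $H$ is the orthogonal projection onto $\mathcal{H}^{0,1}$. Everything else in the proposition will be a formal consequence.

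Given $\beta\in R(\overline{\partial}_b)$, I would set $f:=\overline{\partial}_b^*N\beta$ and verify $\overline{\partial}_b f=\beta$ as follows. Since $\mathcal{H}^{0,1}\subset\ker\overline{\partial}_b^*$, any element of $R(\overline{\partial}_b)$ is orthogonal to $\mathcal{H}^{0,1}$, so $H\beta=0$ and $\square_b^{0,1}N\beta=\beta$. Writing $\beta=\overline{\partial}_b g$, we also have $\overline{\partial}_b\beta=0$; applying $\overline{\partial}_b$ to $\square_b^{0,1}N\beta=\beta$ and using $\overline{\partial}_b^2=0$ gives $\overline{\partial}_b\overline{\partial}_b^*\overline{\partial}_b N\beta=0$, and pairing this with $\overline{\partial}_b N\beta$ yields $\overline{\partial}_b^*\overline{\partial}_b N\beta=0$. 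Hence $\overline{\partial}_b f=\overline{\partial}_b\overline{\partial}_b^*N\beta=\beta$. This simultaneously establishes solvability and the identity $R(\overline{\partial}_b)=\overline{\partial}_b\overline{\partial}_b^*\bigl(\mathrm{Dom}(\square_b^{0,1})\bigr)$, the reverse inclusion being immediate. For any $h\in\mathcal{H}$ one has $\langle f,h\rangle=\langle N\beta,\overline{\partial}_b h\rangle=0$, so $f\in\mathcal{H}^\perp$; uniqueness then follows because two such solutions differ by an element of $\mathcal{H}\cap\mathcal{H}^\perp=\{0\}$. The norm estimate comes from
$$\|f\|^2=\langle\overline{\partial}_b\overline{\partial}_b^*N\beta,N\beta\rangle=\langle\beta,N\beta\rangle\leq\|\beta\|\,\|N\beta\|\leq C\|\beta\|^2,$$
with $C$ the operator norm of $N$. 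Closed range of $\overline{\partial}_b$ is then immediate by a standard Cauchy sequence argument: if $\beta_k\to\beta$ in $L^2_{(0,1)}$ with $\beta_k\in R(\overline{\partial}_b)$, the associated $f_k$ are Cauchy in $L^2$ and their limit $f$ satisfies $\overline{\partial}_b f=\beta$.

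The main obstacle is importing the closed range of $\square_b^{0,1}$; this is the substantive part of Kohn's theory and is where the hypothesis $m\geq 2$ is essential (it can fail in dimension $3$). The rest of the argument is essentially a formal bookkeeping with the Hodge decomposition. If one prefers not to cite this as a black box, the authors' hint is that the Bochner-type estimate (\ref{fe}) already controls second derivatives of functions by $\|\square_b f\|^2+\|\overline{\partial}_b f\|^2$, giving the requisite $1/2$-subelliptic estimate; an analogous Bochner identity on $(0,1)$-forms yields closed range of $\square_b^{0,1}$ by the same functional-analytic mechanism.
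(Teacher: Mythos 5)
Your argument is correct and essentially identical to the paper's: both rest on Kohn's Hodge theory for $(0,1)$-forms from Chen--Shaw, and both construct the canonical solution as $f=\overline{\partial}_b^{\ast}$ applied to the solution of the $\square_b^{0,1}$-Neumann problem, with the $L^2$ estimate coming from boundedness of that solution operator. The only cosmetic difference is that the paper cites Theorem~8.4.10 and Corollary~8.4.11 of \cite{CS} directly (so the form $\alpha$ there plays the role of your $N\beta$), whereas you unpack the same facts explicitly from the Neumann operator and rederive the closed range via the Cauchy-sequence argument rather than citing it separately.
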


\begin{proof}
The first part is Corollary 8.4.11 in \cite{CS}. Suppose $\beta
=\overline{\partial}_{b}u$. By the Hodge decomposition for $\overline
{\partial}_{b}$ on $L_{\left(  0,1\right)  }^{2}\left(  M\right)  $ (Theorem
8.4.10 in \cite{CS}) there exists $\alpha\in L_{\left(  0,1\right)  }%
^{2}\left(  M\right)  $ satisfying $\overline{\partial}_{b}^{\ast}\overline
{\partial}_{b}\alpha=0$ and
\[
\beta=\overline{\partial}_{b}\overline{\partial}_{b}^{\ast}\alpha.
\]
Moreover $\left\Vert \alpha\right\Vert _{1}\leq C\left\Vert \beta\right\Vert
$. It is easy to check that $f:=\overline{\partial}_{b}^{\ast}\alpha$ has all
the desired properties.
\end{proof}

\begin{proposition}
\label{boxb}The range of $\square_{b}:L^{2}\left(  M\right)  \rightarrow
L^{2}\left(  M\right)  $ is closed and more precisely $R\left(  \square
_{b}\right)  =\mathcal{H}^{\bot}$. Moreover, for all $\phi\in\mathcal{H}^{\bot}$,
there exists a unique $f\in\mathcal{H}^{\bot}\cap Dom\left(  \square
_{b}\right)  $ such that $\square_{b}f=\phi$ and%
\begin{equation}
\left\Vert f\right\Vert \leq C\left\Vert \overline{\partial}_{b}f\right\Vert .
\label{dbc}%
\end{equation}

\end{proposition}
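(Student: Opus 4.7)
The plan is to deduce this entirely from the closed-range property of $\overline{\partial}_b$ established in Proposition \ref{dbar}, together with standard Hilbert-space orthogonal decomposition. The key identity is $\square_b = \overline{\partial}_b^\ast \overline{\partial}_b$: once $R(\overline{\partial}_b)$ is closed, the closed range theorem immediately gives that $R(\overline{\partial}_b^\ast)$ is closed and equals $\ker(\overline{\partial}_b)^\perp = \mathcal{H}^\perp$.

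First I would check the easy inclusion $R(\square_b) \subseteq \mathcal{H}^\perp$: for any $g \in Dom(\square_b)$ and any $h \in \mathcal{H}$, integration by parts gives $\langle \square_b g, h\rangle = \langle \overline{\partial}_b g, \overline{\partial}_b h\rangle = 0$. For the reverse, take $\phi \in \mathcal{H}^\perp = R(\overline{\partial}_b^\ast)$ and write $\phi = \overline{\partial}_b^\ast \alpha$ for some $\alpha \in L^2_{(0,1)}(M) \cap Dom(\overline{\partial}_b^\ast)$. Since $R(\overline{\partial}_b)$ is a closed subspace of $L^2_{(0,1)}(M)$, I would use the orthogonal decomposition
\[
L^2_{(0,1)}(M) = R(\overline{\partial}_b) \oplus \ker(\overline{\partial}_b^\ast)
\]
to split $\alpha = \beta + \eta$ with $\beta \in R(\overline{\partial}_b)$ and $\eta \in \ker(\overline{\partial}_b^\ast) \subset Dom(\overline{\partial}_b^\ast)$. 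By Proposition \ref{dbar}, there is a unique $f \in \mathcal{H}^\perp \cap Dom(\overline{\partial}_b)$ with $\overline{\partial}_b f = \beta$ and $\|f\| \le C\|\beta\| = C\|\overline{\partial}_b f\|$, which is precisely the desired estimate (\ref{dbc}). Since $\overline{\partial}_b f = \alpha - \eta \in Dom(\overline{\partial}_b^\ast)$, this $f$ lies in $Dom(\square_b)$, and $\square_b f = \overline{\partial}_b^\ast \overline{\partial}_b f = \overline{\partial}_b^\ast \alpha = \phi$.

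Uniqueness of $f \in \mathcal{H}^\perp \cap Dom(\square_b)$ with $\square_b f = \phi$ follows from the energy identity $\|\overline{\partial}_b f\|^2 = \langle \square_b f, f\rangle$: two such solutions $f_1, f_2$ satisfy $\square_b(f_1-f_2) = 0$, hence $\overline{\partial}_b(f_1-f_2) = 0$, so $f_1 - f_2 \in \mathcal{H} \cap \mathcal{H}^\perp = \{0\}$. Closedness of $R(\square_b)$ is then automatic from $R(\square_b) = \mathcal{H}^\perp$. I do not expect any serious obstacle here; the one technicality worth flagging is the requirement that both summands of $\alpha$ lie in $Dom(\overline{\partial}_b^\ast)$ before one can apply $\overline{\partial}_b^\ast$ piecewise, but this is immediate because $\ker(\overline{\partial}_b^\ast) \subset Dom(\overline{\partial}_b^\ast)$ and $Dom(\overline{\partial}_b^\ast)$ is a vector space.
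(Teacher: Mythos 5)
Your proof is correct, and it takes a cleaner, more direct route than the paper's. The paper proceeds in three stages: first, given $\phi=\square_{b}u$ already known to lie in $R(\square_{b})$, it produces the canonical solution by applying Proposition~\ref{dbar} to the form $\overline{\partial}_{b}u$; second, it establishes closedness of $R(\square_{b})$ by hand via a Cauchy-sequence argument that invokes closedness of $\square_{b}$ as an operator; and only then does it pass from $\overline{R(\square_{b})}$ to $R(\square_{b})=\mathcal{H}^{\bot}$ by an orthogonality and self-adjointness argument. You instead invoke the closed range theorem for the densely defined closed operator $\overline{\partial}_{b}$ to conclude $R(\overline{\partial}_{b}^{\ast})=\ker(\overline{\partial}_{b})^{\bot}=\mathcal{H}^{\bot}$, write $\phi=\overline{\partial}_{b}^{\ast}\alpha$, project $\alpha$ onto the closed subspace $R(\overline{\partial}_{b})$ (the closedness being exactly what Proposition~\ref{dbar} provides), and get the solution $f$, with estimate~(\ref{dbc}), for \emph{every} $\phi\in\mathcal{H}^{\bot}$ in one stroke; closedness of $R(\square_{b})$ is then automatic since $\mathcal{H}^{\bot}$ is closed. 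The domain bookkeeping you flag is handled correctly: $\eta\in\ker(\overline{\partial}_{b}^{\ast})\subset Dom(\overline{\partial}_{b}^{\ast})$ and $\alpha\in Dom(\overline{\partial}_{b}^{\ast})$ force $\beta=\alpha-\eta\in Dom(\overline{\partial}_{b}^{\ast})$, so $f\in Dom(\square_{b})$. What your route buys is a shorter argument that dispenses with the Cauchy-sequence step and with closedness of $\square_{b}$; what it costs is importing the closed range theorem in the unbounded-operator setting, whereas the paper keeps the functional-analytic input confined to Proposition~\ref{dbar} and elementary Hilbert-space arguments. Both proofs ultimately rest on the same foundation, namely Proposition~\ref{dbar}.
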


\begin{proof}
Clearly $R\left(  \square_{b}\right)  \subset\mathcal{H}^{\bot}$. Suppose
$\phi=\square_{b}u\in R\left(  \square_{b}\right)  $. By Proposition
\ref{dbar}, there is a unique $f\in\mathcal{H}^{\bot}$ such that $\overline
{\partial}_{b}f=\overline{\partial}_{b}u$ and $\left\Vert f\right\Vert \leq
C\left\Vert \overline{\partial}_{b}f\right\Vert $. Then $\square_{b}%
f=\square_{b}u=\phi$. We now prove that $R\left(  \square_{b}\right)  $ is
closed. Suppose $\phi_{k}=\square_{b}f_{k}\rightarrow\phi$ in $L_{\left(
0,1\right)  }^{2}\left(  M\right)  $, with each $f_{k}\in\mathcal{H}^{\bot}$.
Then for $k<l$, we have $\square_{b}\left(  f_{k}-f_{l}\right)  =\phi_{k}%
-\phi_{l}$. Thus,%
\begin{align*}
\left\Vert \overline{\partial}_{b}f_{k}-\overline{\partial}_{b}f_{l}%
\right\Vert ^{2}  &  \leq\left\Vert \phi_{k}-\phi_{l}\right\Vert \left\Vert f_{k}-f_{l}%
\right\Vert \\
&  \leq C\left\Vert \phi_{k}-\phi_{l}\right\Vert \left\Vert \overline
{\partial}_{b}f-\overline{\partial}_{b}f_{l}\right\Vert .
\end{align*}
It follows that $\left\Vert \overline{\partial}_{b}f_{k}-\overline{\partial}%
_{b}f_{l}\right\Vert \leq C\left\Vert \phi_{k}-\phi_{l}\right\Vert $. Applying
(\ref{dbc}) again yields $\left\Vert f_{k}-f_{l}\right\Vert \leq
C^{2}\left\Vert \phi_{k}-\phi_{l}\right\Vert $. Therefore, $\left\{
f_{k}\right\}  $ is Cauchy in $L^{2}\left(  M\right)  $. Denote its limit by
$f$. Then $f_{k}\rightarrow f$ and $\square_{b}f_{k}\rightarrow\phi$. As
$\square_{b}$ is a closed operator, we conclude that $f\in Dom\left(
\square_{b}\right)  $ and $\square_{b}f=\phi$. Therefore, $R\left(  \square
_{b}\right)  $ is closed.

If $R\left(  \square_{b}\right)  $ was not the entire $\mathcal{H}^{\bot}$,
then there exists a nonzero $\phi\in\mathcal{H}^{\bot}$ that is perpendicular
to $R\left(  \square_{b}\right)  $. This implies $\phi\in\mathcal{H}$,
obviously a contradiction.
\end{proof}

\bigskip

Therefore, the operator $\square_{b}:\mathcal{H}^{\bot}\cap Dom\left(
\square_{b}\right)  \rightarrow\mathcal{H}^{\bot}$ is bijective. The
inverse operator exists and is denoted by $T:\mathcal{H}^{\bot}\rightarrow\mathcal{H}^{\bot
}$. Namely, for each $\phi\in\mathcal{H}^{\bot}$, we define $f=T\left(
\phi\right)\in \mathcal{H}^{\bot}  $ to be the unique solution to $\square_{b}f=\phi$ 
(which exists and unique by Proposition \ref{boxb}).

\begin{theorem}
The operator $T:\mathcal{H}^{\bot}\rightarrow\mathcal{H}^{\bot}$ is compact.
\end{theorem}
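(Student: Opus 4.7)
To show $T:\mathcal{H}^\perp\to\mathcal{H}^\perp$ is compact, I would take an arbitrary bounded sequence $\{\phi_k\}\subset\mathcal{H}^\perp$ with $\|\phi_k\|\leq 1$, set $f_k:=T\phi_k\in\mathcal{H}^\perp\cap\mathrm{Dom}(\square_b)$ so that $\square_b f_k=\phi_k$, and extract an $L^2$-convergent subsequence from $\{f_k\}$. The strategy is to bootstrap from the a priori estimates already established in Sections 2 and 3 to bound $\bar\partial_b f_k$ uniformly in the Folland--Stein Sobolev space $S^1_{(0,1)}(M)$, and then invoke the Rellich compactness of the embedding $S^1(M)\hookrightarrow L^2(M)$, which is valid on a closed pseudohermitian manifold of dimension $2m+1\geq 5$ (see \cite{CS}).

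First I would establish uniform bounds. Proposition~\ref{boxb} applied to $f_k\in\mathcal{H}^\perp$ gives $\|f_k\|\leq C\|\bar\partial_b f_k\|$, while integration by parts yields $\|\bar\partial_b f_k\|^2=\langle\square_b f_k,f_k\rangle\leq\|\phi_k\|\|f_k\|$; together these produce uniform $L^2$ bounds on both $f_k$ and $\bar\partial_b f_k$. Then the Section~3 form of Proposition~\ref{intK}, applied to each $f_k$ with the Ricci term absorbed by Cauchy--Schwarz into a multiple of $\|\bar\partial_b f_k\|^2$ (using that $M$ is compact and $R_{\sigma\bar\alpha}$ is bounded), yields a uniform bound on $\|f_{k,\bar\alpha,\bar\beta}\|_{L^2}^2+\|f_{k,\bar\alpha,\beta}\|_{L^2}^2$.

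The key observation is that if we set $\eta_k:=\bar\partial_b f_k$, a $(0,1)$-form with components $(\eta_k)_{\bar\alpha}=f_{k,\bar\alpha}$, then its horizontal covariant derivatives are exactly $f_{k,\bar\alpha,\beta}$ and $f_{k,\bar\alpha,\bar\beta}$. Consequently $\{\eta_k\}$ is uniformly bounded in $S^1_{(0,1)}(M)$, and the subelliptic Rellich embedding produces a subsequence with $\eta_{k_j}\to\eta$ in $L^2_{(0,1)}(M)$. Applying the Poincar\'e-type bound from Proposition~\ref{dbar} to the differences $f_{k_j}-f_{k_l}\in\mathcal{H}^\perp$ gives $\|f_{k_j}-f_{k_l}\|\leq C\|\eta_{k_j}-\eta_{k_l}\|$, so $\{f_{k_j}\}$ is Cauchy in $L^2$. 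The only non-routine ingredient is the Folland--Stein Rellich embedding itself, imported from the Kohn theory; once that is admitted, the compactness of $T$ is a short assembly of the preceding estimates.
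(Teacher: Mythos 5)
Your proof is correct and takes essentially the same route as the paper: both establish uniform control on the second horizontal derivatives $f_{k,\bar\alpha,\beta}$, $f_{k,\bar\alpha,\bar\beta}$ via the Bochner identity of Proposition~\ref{intK}, use subelliptic compactness to extract an $L^2$-convergent subsequence of $\bar\partial_b f_k$, and then transfer convergence to $f_k$ via the Poincar\'e bound from Proposition~\ref{boxb}. The only cosmetic difference is that the paper passes explicitly through the H\"ormander $\tfrac12$-estimate to bound $\bar\partial_b f_k$ in the isotropic space $W^{1/2}$ and then uses the classical compact embedding $W^{1/2}\hookrightarrow L^2$, whereas you invoke the Folland--Stein Rellich embedding $S^1\hookrightarrow L^2$ directly; these are equivalent packagings of the same subelliptic fact.
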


\begin{proof}
By (\ref{fe}) there exists a constant $C>0$ such that for any $f\in C^{\infty
}\left(  M\right)  $%
\[
\left\Vert f_{\overline{\alpha},\overline{\beta}}\right\Vert ^{2}+\left\Vert
f_{\overline{\alpha},\beta}\right\Vert ^{2}\leq C\left(  \left\Vert
\overline{\partial}_{b}f\right\Vert ^{2}+\left\Vert \square_{b}f\right\Vert
^{2}\right)  .
\]
It follows, by the H\"{o}rmander estimate (see Theorem 8.2.5 in \cite{CS}), that
\begin{equation}
\left\Vert \overline{\partial}_{b}f\right\Vert _{1/2}^{2}\leq C\left(
\left\Vert \overline{\partial}_{b}f\right\Vert ^{2}+\left\Vert \square
_{b}f\right\Vert ^{2}\right), \label{half}%
\end{equation}
where $\left\Vert\cdot \right\Vert _{1/2}$ is the norm for the Sobolev space $W_{\left(  0,1\right)  }^{1/2}\left(  M\right)$.
By approximation, this inequality holds for all $f\in Dom\left(  \square
_{b}\right)  $. We can further assume that $f\in\mathcal{H}^{\bot}$.

Suppose $\left\{  f_{k}\right\}  \subset\mathcal{H}^{\bot}\cap Dom\left(
\square_{b}\right)  $ is a sequence such that $\phi_{k}=\square_{b}f_{k}$ are
bounded in $L^{2}\left(  M\right)  $. By (\ref{half}), $\overline{\partial}%
_{b}f_{k}$ are bounded in $W_{\left(  0,1\right)  }^{1/2}\left(  M\right)  $. 
By the Sobolev embedding theorem and
passing to a subsequence, we can assume that $\left\{
\overline{\partial}_{b}f_{k}\right\}  $ is Cauchy in $L_{\left(  0,1\right)
}^{2}\left(  M\right)  $. By (\ref{dbc}), $\left\{  f_{k}\right\}  $ is Cauchy
in $L^{2}\left(  M\right)  $. The proof is complete.
\end{proof}
\begin{theorem}
The $\mathrm{spec}\left(  \square_{b}\right)  $ consists of countably many eigenvalues
$\lambda_{0}=0<\lambda_{1}<\lambda_{2}<\cdots$ with $\lambda_{i}\rightarrow\infty$ as $i\rightarrow\infty$. Moreover, for $i\geq1$, each
$\lambda_{i}$ is an eigenvalue of finite multiplicity and all the
corresponding eigenfunctions are smooth.
\end{theorem}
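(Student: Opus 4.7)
The plan is to combine the compactness of $T$ from the preceding theorem with the spectral theorem for compact self-adjoint operators, and then invoke subelliptic regularity to upgrade eigenfunctions to $C^{\infty}$.

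First, I would verify that $T:\mathcal{H}^{\perp}\to\mathcal{H}^{\perp}$ is self-adjoint. Since $\square_{b}$ is a nonnegative self-adjoint operator on $L^{2}(M)$ whose kernel is $\mathcal{H}$, Proposition~\ref{boxb} shows that its restriction to $\mathcal{H}^{\perp}\cap\mathrm{Dom}(\square_{b})$ is a bijection onto $\mathcal{H}^{\perp}$, and the bounded inverse $T$ inherits self-adjointness. Combined with the compactness from the previous theorem, the spectral theorem furnishes an orthonormal basis of $\mathcal{H}^{\perp}$ consisting of eigenfunctions of $T$, whose distinct eigenvalues form a strictly decreasing sequence $\mu_{1}>\mu_{2}>\cdots>0$ accumulating only at $0$, each eigenvalue having finite multiplicity. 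Setting $\lambda_{i}:=1/\mu_{i}$, these are precisely the positive eigenvalues of $\square_{b}$: they form an increasing sequence tending to infinity with finite-dimensional eigenspaces, and together with the zero eigenvalue $\lambda_{0}=0$ whose eigenspace is $\mathcal{H}$ they exhaust the spectrum.

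Finally, smoothness of the eigenfunctions for $\lambda_{i}>0$ would follow from Kohn's subelliptic estimate. Since $m\geq 2$, condition $Y(1)$ is satisfied and $\square_{b}$ acting on $(0,1)$-forms is subelliptic with a gain of $1/2$ derivative in the Sobolev scale. Given $\square_{b}f=\lambda f$ with $\lambda>0$, the $(0,1)$-form $\alpha:=\overline{\partial}_{b}f$ satisfies $\square_{b}\alpha=\lambda\alpha$ (using $\overline{\partial}_{b}^{2}=0$), so iterating the $1/2$-gain estimate~(\ref{half}) in the Sobolev scale forces $\alpha\in C^{\infty}$; then $f=\lambda^{-1}\overline{\partial}_{b}^{\ast}\alpha$ is smooth as well. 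The main substantive work was already accomplished in establishing compactness of $T$; the only remaining obstacle is the routine subelliptic bootstrap, which requires nothing beyond Kohn's $1/2$-estimate on $(0,1)$-forms (Theorem~8.2.5 in \cite{CS}) and the Sobolev embedding theorem.
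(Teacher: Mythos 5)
Your argument is correct and follows essentially the same route as the paper: compactness of $T$ together with the spectral theorem gives the eigenvalue structure, and regularity of the eigenform $\overline{\partial}_b f$ plus $f=\lambda^{-1}\overline{\partial}_b^{\ast}\overline{\partial}_b f$ yields smoothness. Your explicit check that $T$ is self-adjoint is a worthwhile detail the paper leaves implicit; the one small imprecision is that the estimate (\ref{half}) is stated for scalar functions, so the bootstrap should cite the subelliptic estimate for $\square_b$ on $(0,1)$-forms directly rather than ``iterate (\ref{half})'', which is what the paper's appeal to Hodge theory for $(0,1)$-forms accomplishes.
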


\begin{proof}
We have proved, in Proposition, \ref{boxb} that $R\left(  \square_{b}\right)  $
is closed. Thus, $\lambda_{0}=0$ is an eigenvalue whose corresponding
eigenspace is $\mathcal{H}$, which is of infinite dimensional. With respect to the
orthogonal decomposition $L^{2}\left(  M\right)  =\mathcal{H}\oplus
\mathcal{H}^{\bot}$, the operator $\lambda I-\square_{b}$ is given by the following
matrix%
\[%
\begin{bmatrix}
\lambda I & 0\\
0 & \lambda I-\square_{b}|_{\mathcal{H}^{\bot}}%
\end{bmatrix}
.
\]
Therefore, $\lambda>0$ is in $\mathrm{spec}\left(  \square_{b}\right)  $ if and only if
$\lambda^{-1}$ is in $\mathrm{spec}\left(  T\right)  $. As $T$ is compact, $\mathrm{spec}\left(
T\right)  \cap\left(  0,+\infty\right)  $ consists of countably many
eigenvalues of finite multiplicities $\mu_{1}>\mu_{2}>\cdots$ $\ $\ with
$\lim\mu_{i}=0$. Therefore, $\mathrm{spec}\left(  \square_{b}\right)  $ consists of
$\lambda_{0}=0<\lambda_{1}<\lambda_{2}<\cdots$ with $\lambda_{i}=1/\mu_{i}$
for all $i\geq1$ and the eigenspace of $\square_{b}$ corresponding to
$\lambda_{i}$ equals the eigenspace of $T$ corresponding to $\mu_{i}$.

Suppose $f$ is an eigenfunction corresponding to an eigenvalue $\lambda>0$, i.e. $\square
_{b}f=\lambda f$. Then the $\left(  0,1\right)  $-form $\beta=\overline
{\partial}_{b}f$ satisfies%
\begin{align*}
\square_{b}\beta & =\overline{\partial}_{b}\square_{b}f\\
& =\lambda\overline{\partial}_{b}f\\
& =\lambda\beta.
\end{align*}
By the Hodge theory for $\left(  0,1\right)  $-forms, $\beta$ is smooth. As
$f=\frac{1}{\lambda}\overline{\partial}_{b}^{\ast}\beta$, we see that $f$ is
smooth. 
\end{proof}

\section{The eigenvalue estimate}
With the spectral theory of $\square_{b}$ understood, we can now state the following

\begin{theorem}
\label{est}Let $M$ be a closed pseudohermitian manifold of dimension $2m+1$.
When $m=1$, we further assume that the Paneitz operator is non-negative.
Suppose%
\[
Ric\left(  X,X\right)  \geq\kappa\left\vert X\right\vert ^{2},
\]
where $\kappa$ is a positive constant. Then any nonzero eigenvalue of
$\square_{b}$ satisfies
\[
\lambda\geq\frac{m}{m+1}\kappa.
\]
\end{theorem}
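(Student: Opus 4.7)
The plan is to apply the integrated Bochner-type identity \eqref{fund} from Section~2 to a smooth eigenfunction of $\square_{b}$ and then throw away all but the curvature term on the right-hand side.

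Suppose $\square_{b}f=\lambda f$ with $\lambda>0$. Smoothness of $f$ is guaranteed by the spectral theory developed in Section~3 when $m\geq 2$, and by the result of Burns and Epstein recalled at the start of Section~3 when $m=1$. Pairing the eigenvalue equation with $f$ in $L^{2}$ gives
\[
\left\Vert \overline{\partial}_{b}f\right\Vert^{2}=\langle\square_{b}f,f\rangle=\lambda\left\Vert f\right\Vert^{2},\qquad \left\Vert \square_{b}f\right\Vert^{2}=\lambda^{2}\left\Vert f\right\Vert^{2}.
\]

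Substituting these relations into \eqref{fund} produces
\[
\tfrac{m+1}{m}\lambda^{2}\left\Vert f\right\Vert^{2}=\int_{M}|f_{\overline{\alpha},\overline{\beta}}|^{2}+\int_{M}R_{\alpha\overline{\beta}}f_{\overline{\alpha}}\overline{f}_{\beta}+\tfrac{1}{m}\int_{M}\overline{f}P_{0}f.
\]
The first term on the right is manifestly non-negative. The Paneitz term is also non-negative: when $m\geq 2$ this is the content of \eqref{P0}, and when $m=1$ it is the standing hypothesis of the theorem. For the curvature term, regarding $(f_{\overline{\alpha}})_{\alpha}$ as the coefficients of a $(1,0)$-vector in the chosen unitary frame and applying the Ricci lower bound pointwise gives $R_{\alpha\overline{\beta}}f_{\overline{\alpha}}\overline{f}_{\beta}\geq\kappa\sum_{\alpha}|f_{\overline{\alpha}}|^{2}$, whence
\[
\int_{M}R_{\alpha\overline{\beta}}f_{\overline{\alpha}}\overline{f}_{\beta}\geq\kappa\left\Vert \overline{\partial}_{b}f\right\Vert^{2}=\kappa\lambda\left\Vert f\right\Vert^{2}.
\]

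Assembling the three estimates yields $\tfrac{m+1}{m}\lambda^{2}\left\Vert f\right\Vert^{2}\geq\kappa\lambda\left\Vert f\right\Vert^{2}$, and dividing by the positive number $\lambda\left\Vert f\right\Vert^{2}$ produces the sharp bound $\lambda\geq\tfrac{m}{m+1}\kappa$. There is no genuine obstacle at this step; every piece of analysis that matters -- the Bochner identity \eqref{fund}, the non-negativity of $P_{0}$ in dimension at least five, and the existence and smoothness of eigenfunctions -- has already been established in Sections~2 and~3, and the eigenvalue estimate is essentially a one-line algebraic consequence.
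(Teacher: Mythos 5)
Your argument is the same as the paper's: substitute the eigenvalue relations $\Vert\overline{\partial}_{b}f\Vert^{2}=\lambda\Vert f\Vert^{2}$ and $\Vert\square_{b}f\Vert^{2}=\lambda^{2}\Vert f\Vert^{2}$ into the integrated Bochner identity \eqref{fund}, drop the non-negative $|f_{\overline{\alpha},\overline{\beta}}|^{2}$ and Paneitz terms (the latter non-negative by hypothesis when $m=1$ and by \eqref{P0} when $m\geq 2$), and bound the Ricci term below by $\kappa\lambda\Vert f\Vert^{2}$. This is precisely the calculation given in the paper's proof of Theorem~\ref{est}.
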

\begin{proof}
Suppose $f$ is a nonzero eigenfunction with eigenvalue $\lambda>0$. By
(\ref{fund}), under the assumptions%
\begin{align*}
\frac{m+1}{m}\lambda^{2}\int_{M}\left\vert f\right\vert ^{2}  &  =\int
_{M}\left\vert f_{\overline{\alpha},\overline{\beta}}\right\vert ^{2}+\int
_{M}R_{\alpha\overline{\beta}}f_{\overline{\alpha}}\overline{f}_{\beta}%
+\frac{1}{m}\int_{M}\overline{f}P_{0}f\\
&  \geq\kappa\int_{M}\left\vert \overline{\partial}f\right\vert ^{2}\\
&  =\kappa\int_{M}\overline{f}\square_{b}f\\
&  =\lambda\kappa\int_{M}\left\vert f\right\vert ^{2}.
\end{align*}
Thus, $\lambda\geq\frac{m}{m+1}\kappa$.
\end{proof}

\bigskip Theorem~\ref{est} was first proved by Chanillo, Chiu and Yang \cite{CCY} in the
case $m=1$. We have followed basically the same argument (see also Chang and
Wu \cite{CW}).


\begin{proposition}
\label{ids}Suppose $\lambda=\frac{m}{m+1}\kappa$ in Theorem \ref{est} and $f$
a corresponding eigenfunction. Then we must have:
\begin{enumerate}
\renewcommand{\labelenumi}{(\roman{enumi})}
\item If $m=1$, then%
\[
f_{\overline{1},\overline{1}}=0,\quad f_{\overline{1},1}=-\frac{\kappa}{2} %
f,\quad P_{0}f=0;
\]
\item If $m\geq2$, then%
\[
f_{\overline{\alpha},\overline{\beta}}=0,\quad f_{\overline{\alpha},\beta}%
=-\frac{\kappa}{m+1}f\delta_{\alpha\beta}.
\]
\end{enumerate}
\end{proposition}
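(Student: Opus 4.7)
The plan is to trace the equality case of Theorem~\ref{est} through the integral identity already established. Starting from the key formula (\ref{fund}) in Proposition~\ref{intK} of Section~2,
\[
\frac{m+1}{m}\int_{M}|\square_{b}f|^{2}=\int_{M}|f_{\overline{\alpha},\overline{\beta}}|^{2}+\int_{M}R_{\alpha\overline{\beta}}f_{\overline{\alpha}}\overline{f}_{\beta}+\frac{1}{m}\int_{M}\overline{f}P_{0}f,
\]
I substitute the eigenvalue relation $\square_{b}f=\lambda f$ (so $|\square_{b}f|^{2}=\lambda^{2}|f|^{2}$ and $\int|\overline{\partial}_{b}f|^{2}=\lambda\int|f|^{2}$) and use the lower bounds $R_{\alpha\overline{\beta}}f_{\overline{\alpha}}\overline{f}_{\beta}\geq\kappa|\overline{\partial}_{b}f|^{2}$ and $\int\overline{f}P_{0}f\geq0$ (hypothesized for $m=1$, automatic for $m\geq 2$). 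The resulting chain is exactly the one in the proof of Theorem~\ref{est}, and the saturation $\lambda=\frac{m}{m+1}\kappa$ forces every slack to vanish. In particular, each of the three nonnegative quantities
\[
\int_{M}|f_{\overline{\alpha},\overline{\beta}}|^{2},\quad \int_{M}\bigl(R_{\alpha\overline{\beta}}-\kappa\delta_{\alpha\beta}\bigr)f_{\overline{\alpha}}\overline{f}_{\beta},\quad \int_{M}\overline{f}P_{0}f
\]
must equal zero. The first vanishing immediately gives $f_{\overline{\alpha},\overline{\beta}}=0$ pointwise, which is one of the two (or three) conclusions in each case.

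For $m\geq 2$ I finish by invoking the Graham--Lee identity (\ref{P0}), which rewrites $\frac{1}{m}\int\overline{f}P_{0}f=\frac{1}{m-1}\int|B_{\overline{\alpha},\beta}f|^{2}$. Hence $B_{\overline{\alpha},\beta}f\equiv 0$, so $f_{\overline{\alpha},\beta}$ is a pure trace: $f_{\overline{\alpha},\beta}=\frac{1}{m}f_{\overline{\gamma},\gamma}\delta_{\alpha\beta}$. Using $f_{\overline{\gamma},\gamma}=-\square_{b}f=-\lambda f$ and $\lambda=\frac{m\kappa}{m+1}$ yields exactly $f_{\overline{\alpha},\beta}=-\frac{\kappa}{m+1}f\delta_{\alpha\beta}$.

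For $m=1$ the identity (\ref{P0}) is empty (it reduces to $0=0$), so I instead exploit the Cauchy--Schwarz inequality for the non-negative Hermitian form $(g,h)\mapsto\int\overline{g}P_{0}h$: from $\int\overline{f}P_{0}f=0$ we get $\int\overline{f}P_{0}g=0$ for every smooth $g$, and then self-adjointness of $P_{0}$ gives $P_{0}f=0$. The remaining identity $f_{\overline{1},1}=-\frac{\kappa}{2}f$ is just a rewriting of $\square_{b}f=\lambda f$ when $m=1$ and $\lambda=\kappa/2$. I do not anticipate a serious obstacle: the argument is a pure equality-case bookkeeping exercise, with the only mild subtlety being the step from $\int\overline{f}P_{0}f=0$ to $P_{0}f=0$ in dimension three, which crucially depends on the extra non-negativity assumption on $P_{0}$ built into Theorem~\ref{est}.
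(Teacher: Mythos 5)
Your proof is correct and takes essentially the same approach as the paper's: you trace the equality case through the chain of inequalities coming from (\ref{fund}) to force $f_{\overline{\alpha},\overline{\beta}}=0$ and $\int_{M}\overline{f}P_{0}f=0$, then use the Graham--Lee identity (\ref{P0}) for $m\geq 2$ and the polarization/self-adjointness argument for $m=1$. The only cosmetic difference is that the paper first deduces $P_{0}f=0$ (and $P_{0}\overline{f}=0$ by reality) in all dimensions before specializing, whereas for $m\geq 2$ you apply (\ref{P0}) directly at the level of $\int_{M}\overline{f}P_{0}f=0$ to get $B_{\overline{\alpha},\beta}f\equiv 0$.
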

\begin{proof}
If equality holds, by inspecting the proof of Theorem \ref{est}, we must have
$f_{\overline{\alpha},\overline{\beta}}=0$ and $\int_{M}\overline{f}P_{0}f=0$.
As $P_{0}$ is nonnegative, it follows easily that $P_{0}f=0$. As $P_{0}$ is a
real operator, we also have $P_{0}\overline{f}=0$. When $m\geq2$, this implies
by (\ref{P0}) that
\begin{align*}
f_{\overline{\alpha},\beta}  &  =-\frac{1}{m}\square_{b}f\delta_{\alpha\beta}\\
&  =-\frac{\kappa}{m+1}f\delta_{\alpha\beta}.\qedhere
\end{align*}
\end{proof}

Combining this Proposition and Theorem \ref{main}, we immediately obtain the following 
\begin{corollary}
\bigskip Suppose $\lambda=\frac{m}{m+1}\kappa$ in Theorem \ref{est} and
$m\geq 2$. Then $\left(  M,\theta\right)  $ is CR equivalent to $\mathbb{S}%
^{2m+1}$ with its standard pseudohermitian structure, up to scaling.
\end{corollary}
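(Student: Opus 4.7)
The plan is to deduce this Corollary directly by combining Proposition~\ref{ids}(ii) with Theorem~\ref{main}. First, since $\lambda = \frac{m}{m+1}\kappa$ is assumed to be an eigenvalue of $\square_b$, there exists a nonzero eigenfunction $f \in C^\infty(M)$ with $\square_b f = \lambda f$. By the spectral theory developed in Section~3, such an eigenfunction is automatically smooth, so there are no regularity issues.

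Next, because $m \geq 2$, Proposition~\ref{ids}(ii) applies and tells us that $f$ satisfies the overdetermined system
\[
f_{\overline{\alpha},\overline{\beta}} = 0, \qquad f_{\overline{\alpha},\beta} = -\frac{\kappa}{m+1}\, f\, \delta_{\alpha\beta}.
\]
Setting $c = \frac{\kappa}{m+1}$, which is positive since $\kappa > 0$, we see that $f$ is precisely a nonzero complex-valued function of the type hypothesized in Theorem~\ref{main}.

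Finally, we invoke Theorem~\ref{main}, whose hypotheses (closed pseudohermitian manifold of dimension $2m+1 \geq 5$, existence of such an $f$, and $c > 0$) are all met, to conclude that $(M,\theta)$ is CR equivalent to $\mathbb{S}^{2m+1}$ equipped with its standard pseudohermitian structure, up to a scaling. In particular there is no genuine obstacle in this Corollary itself: the real work is hidden in Proposition~\ref{ids} (which extracts the rigidity in the Bochner identity from the equality case) and, above all, in Theorem~\ref{main}, whose proof in Section~5 handles the nontrivial geometric step of forcing the Tanaka--Webster torsion to vanish and identifying $M$ with the sphere.
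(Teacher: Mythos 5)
Your proof is correct and follows the paper's own route: it cites Proposition~\ref{ids}(ii) to put an eigenfunction for $\lambda=\frac{m}{m+1}\kappa$ into the overdetermined form required by Theorem~\ref{main} with $c=\frac{\kappa}{m+1}>0$, and then invokes Theorem~\ref{main}. The paper states the corollary as an immediate consequence of exactly this combination, and your remark about smoothness of eigenfunctions (from the Section~3 spectral theory) is a legitimate but standard point that the paper leaves implicit.
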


\section{\bigskip Proof of the Main Theorem}
We now prove our main theorem (Theorem \ref{main}).
By scaling, we may assume $c=1/2$. Theorem \ref{main} is equivalent to the following
\begin{theorem}
Let $\left(  M,\theta\right)  $ be closed pseudohermitian
manifold with dimension $2m+1\geq5$. Suppose\ that there exists a nonzero
complex function $f$ on $M$ satisfying%
\begin{align*}
f_{\overline{\alpha},\overline{\beta}}  &  =0,\\
f_{\overline{\alpha},\beta}  &  =-\frac{1}{2}f\delta_{\alpha\beta}%
\end{align*}
Then $\left(  M,\theta\right)  $ is CR equivalent to
the $\mathbb{S}^{2m+1}$ with its standard pseudohermitian structure.
\end{theorem}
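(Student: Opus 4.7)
The plan is to reduce Theorem \ref{main} to the Obata-type Theorem \ref{Ob} by producing from $f$ a real-valued function $u$ satisfying the hypotheses of the latter. The main obstacle, and the key observation, comes at the very first step: the second identity $f_{\overline{\alpha},\beta} = -\tfrac{1}{2} f\,\delta_{\alpha\beta}$ already forces $\partial_b f \equiv 0$. Indeed, differentiating it in a $(1,0)$-direction $T_\gamma$ yields $f_{\overline{\alpha},\beta\gamma} = -\tfrac{1}{2} f_\gamma\,\delta_{\alpha\beta}$. Because the Tanaka--Webster curvature has no $(2,0)$-component and its torsion vanishes on $T^{1,0}\wedge T^{1,0}$, the covariant derivatives of the $(0,1)$-tensor $f_{\overline{\alpha}}$ in two $(1,0)$-directions commute, so by relabeling one also has $f_{\overline{\alpha},\gamma\beta} = -\tfrac{1}{2} f_\beta\,\delta_{\alpha\gamma}$. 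Equating gives the purely algebraic constraint $f_\gamma\,\delta_{\alpha\beta} = f_\beta\,\delta_{\alpha\gamma}$ for all $\alpha,\beta,\gamma$. Choosing $\alpha = \beta \ne \gamma$---possible exactly because $m \geq 2$---forces $f_\gamma = 0$ for every $\gamma$, i.e., $\overline{f}$ is CR-holomorphic. This is the only place where the dimension hypothesis enters, and it is also why the $3$-dimensional case is not accessible by this route.

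Given $\partial_b f \equiv 0$, one has $f_{\alpha,\overline{\beta}} = 0$ automatically. The standard scalar commutator $f_{\alpha,\overline{\beta}} - f_{\overline{\beta},\alpha} = i\delta_{\alpha\beta}\, f_0$, combined with the hypothesis $f_{\overline{\beta},\alpha} = -\tfrac{1}{2} f\,\delta_{\alpha\beta}$, yields
\[
f_0 \;=\; -\tfrac{i}{2}\, f.
\]
Smoothness of $f$ is not an issue: tracing the second hypothesis gives $\square_b f = \tfrac{m}{2} f$, so $f$ is an eigenfunction of $\square_b$ and hence smooth by the spectral theory of Section~3.

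Finally set $u = f + \overline{f}$, which is real. Using $f_\alpha = 0$, the conjugation identity $(\overline{f})_{\alpha,\beta} = \overline{f_{\overline{\alpha},\overline{\beta}}}$, and the two hypotheses, one computes
\[
u_{\alpha,\beta} \;=\; 0, \qquad u_{\alpha,\overline{\beta}} \;=\; -\tfrac{1}{2}\,\overline{f}\,\delta_{\alpha\beta},
\]
while the value of $f_0$ from Step~2 gives $u_0 = \tfrac{i}{2}(\overline{f} - f)$. Matching the $f$-coefficient and the $\overline{f}$-coefficient separately in
\[
-\tfrac{1}{2}\,\overline{f}\,\delta_{\alpha\beta}
\;\stackrel{?}{=}\;
\Bigl(-\tfrac{\kappa}{2(m+1)}\, u + \tfrac{i}{2}\, u_0\Bigr)\delta_{\alpha\beta}
\]
gives the same value $\kappa = \tfrac{m+1}{2} > 0$, so the hypotheses of Theorem~\ref{Ob} hold for $u$ with this $\kappa$. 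If $u \equiv 0$ (equivalently $f$ is purely imaginary), the parallel verification with $v = i(f - \overline{f})$ produces a nonzero real-valued solution of the same system (now with $v_0 = 0$ and again $\kappa = (m+1)/2$), and Theorem~\ref{Ob} applies to $v$ instead. In either case, $(M,\theta)$ is CR-equivalent to $\mathbb{S}^{2m+1}$ with its standard pseudohermitian structure, up to scaling. Note that, by this route, the vanishing of the pseudohermitian torsion and the Einstein condition on the Ricci curvature are inherited from the proof of Theorem~\ref{Ob} in \cite{LW} rather than being proved in isolation here.
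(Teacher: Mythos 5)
Your proof breaks at the very first step. You claim that ``the Tanaka--Webster curvature has no $(2,0)$-component and its torsion vanishes on $T^{1,0}\wedge T^{1,0}$,'' so that covariant derivatives of the $(0,1)$-tensor $f_{\overline{\alpha}}$ in two $(1,0)$-directions commute. This is false: although the connection's torsion $2$-form has no $\theta^\beta\wedge\theta^\gamma$ component, the Tanaka--Webster curvature form $\Pi_\beta{}^{\alpha}$ does contain the $(2,0)$-term $-\sqrt{-1}\,\tau_\beta\wedge\theta^\alpha = -\sqrt{-1}\,A_{\beta\gamma}\,\theta^\gamma\wedge\theta^\alpha$, where $A$ is the pseudohermitian torsion. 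The resulting Ricci identity is
\[
f_{\overline{\alpha},\beta\gamma}
= f_{\overline{\alpha},\gamma\beta}
-\sqrt{-1}\bigl(\delta_{\alpha\beta}A_{\gamma\sigma}-\delta_{\alpha\gamma}A_{\beta\sigma}\bigr)f_{\overline{\sigma}},
\]
and your algebraic manipulation then yields only $f_{\gamma} = 2\sqrt{-1}\,A_{\gamma\sigma}f_{\overline{\sigma}}$, not $f_\gamma = 0$. In particular $\partial_b f$ need not vanish a priori, and one certainly cannot deduce $f_0 = -\tfrac{i}{2}f$ from the commutator without first disposing of $A$.

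The point you have skipped is exactly the heart of the theorem: proving that the pseudohermitian torsion $A$ vanishes. The argument in the paper constructs the auxiliary function $\psi = 2Q/|\overline{\partial}_b f|^2$ on the dense open set where $\overline{\partial}_b f\ne 0$, shows (via a delicate combination of Bochner-type integral identities and a maximum-point argument) that $\psi$ is CR-pluriharmonic and in fact identically zero, and only then concludes $A\equiv 0$, $\partial_b f\equiv 0$, and $\sqrt{-1}f_0 = \tfrac{1}{2}f$. Once all that is in hand, setting $u=\operatorname{Re}f$ and reducing to the Li--Wang Obata theorem is straightforward --- but that reduction is precisely what required the nontrivial part of the work. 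Your final paragraph even anticipates this structure but attributes the vanishing of torsion to the proof of Theorem~\ref{Ob}, which is circular: the torsion-free condition has to be established here for $f$ before Theorem~\ref{Ob} (or even the simplified $A=0$ version of it) can apply.
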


Therefore we have
\begin{align}
f_{\overline{\alpha},\overline{\beta}}  &  =0,\label{i1}\\
f_{\overline{\alpha},\beta}  &  =-\frac{1}{2}f\delta_{\alpha\beta}. \label{i2}%
\end{align}
Using (\ref{i1}) and (\ref{i2}) it is easy to derive%
\begin{equation}
\left(  \left\vert \overline{\partial}_{b}f\right\vert ^{2}\right)  _{\alpha
}=-\frac{1}{2}f\overline{f}_{\alpha}. \label{gr2}%
\end{equation}

\begin{proposition}
We have%
\begin{align}
A_{\alpha\beta}\overline{f}_{\gamma}  &  =A_{\alpha\gamma}\overline{f}_{\beta
},\label{i4}\\
f_{\gamma}  &  =2\sqrt{-1}A_{\gamma\sigma}f_{\overline{\sigma}}. \label{i5}%
\end{align}

\end{proposition}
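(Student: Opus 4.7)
My plan is to derive both identities by applying the Tanaka-Webster Ricci commutation formulas to third covariant derivatives of $f$. The torsion $A_{\alpha\beta}$ enters through two related mechanisms: the Reeb bracket $[T_\beta,T_0]$ carries an $\overline{A}$-term in the antiholomorphic direction (via the structure equation $d\theta^\alpha=\theta^\beta\wedge\omega_\beta{}^\alpha+\theta\wedge\tau^\alpha$ with $\tau^\alpha=A^\alpha{}_{\overline{\beta}}\overline{\theta}^\beta$), and the Tanaka-Webster curvature on the $T^{0,1}$-bundle contains a pure-type piece determined algebraically by $A$ via the second Bianchi identity derived from this structure equation.

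For (\ref{i4}), I would differentiate (\ref{i1}) in $T_\gamma$ and (\ref{i2}) in $T_{\overline{\beta}}$ to obtain $f_{\overline{\alpha},\overline{\beta},\gamma}=0$ and $f_{\overline{\alpha},\gamma,\overline{\beta}}=-\tfrac{1}{2}f_{\overline{\beta}}\delta_{\alpha\gamma}$; the mixed Ricci commutator on the $(0,1)$-form $f_{\overline{\alpha}}$ then gives
\[
\tfrac{1}{2}f_{\overline{\beta}}\delta_{\alpha\gamma}=-\sqrt{-1}\,\delta_{\beta\gamma}\,f_{\overline{\alpha},0}-R_{\overline{\alpha}}{}^{\overline{\sigma}}{}_{\overline{\beta}\gamma}f_{\overline{\sigma}}.
\]
Antisymmetrizing in $(\beta,\gamma)$ kills the Reeb term, and the remaining curvature contribution (which by Bianchi reduces to an expression in $A$) yields (\ref{i4}) after complex conjugation converts $f_{\overline{\sigma}}$ into $\overline{f}_\sigma$. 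For (\ref{i5}), I would take the trace of (\ref{i2}) to get $\square_b f=\tfrac{m}{2}f$ and apply $\nabla_\gamma$, producing $\sum_\alpha f_{\overline{\alpha},\alpha,\gamma}=-\tfrac{m}{2}f_\gamma$; direct differentiation of (\ref{i2}) instead yields $\sum_\alpha f_{\overline{\alpha},\gamma,\alpha}=-\tfrac{1}{2}f_\gamma$. Their difference is a commutator of two $(1,0)$-covariant derivatives acting on $f_{\overline{\alpha}}$: the holomorphic-holomorphic torsion vanishes, but the pure-type Tanaka-Webster curvature (algebraically determined by $A$) contributes $\sqrt{-1}(m-1)A_{\sigma\gamma}f_{\overline{\sigma}}$. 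Dividing by $m-1\geq 1$ and using $A_{\sigma\gamma}=A_{\gamma\sigma}$ gives (\ref{i5}).

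The main obstacle is sign and convention bookkeeping: the Ricci identity on $(0,1)$-forms produces both a Webster-Ricci $(1,1)$-piece and pure-type pieces algebraically determined by $A$ via Bianchi, and these, together with the Reeb-torsion term $T(T_\beta,T_0)$, must be tracked consistently across both computations. Once the conventions are fixed, both identities follow essentially by inspection from the overdetermined nature of (\ref{i1})-(\ref{i2}).
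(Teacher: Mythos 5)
Your argument for (\ref{i5}) is essentially correct and is just the traced version of the paper's computation: the paper differentiates $f_{\overline{\alpha},\beta}=-\tfrac12 f\delta_{\alpha\beta}$ in $T_\gamma$, uses the commutation identity for a $(0,1)$-form in two holomorphic directions (whose correction is the pure $A$-term), and then chooses $\alpha=\beta\neq\gamma$; you trace over $\alpha=\beta$ instead, which produces the same equation with the same factor $(m-1)$. That is a harmless variant.

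The route you sketch for (\ref{i4}) does not work. When you compare $f_{\overline{\alpha},\overline{\beta},\gamma}$ with $f_{\overline{\alpha},\gamma,\overline{\beta}}$ you are commuting a holomorphic and an antiholomorphic covariant derivative, and the curvature piece that appears in the Ricci identity for the $(0,1)$-form $f_{\overline{\alpha}}$ is the mixed $(1,1)$ component, i.e.\ the genuine Webster curvature tensor $R_{\overline{\alpha}}{}^{\overline{\sigma}}{}_{\overline{\beta}\gamma}$. That tensor is a second-order curvature invariant of $(M,\theta)$ and is in no way algebraically reducible to the torsion $A_{\alpha\beta}$; no Bianchi identity collapses it to an expression in $A$. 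Only the $(2,0)$ and $(0,2)$ parts of the Tanaka--Webster curvature $2$-form (arising from the $\theta_\alpha\wedge\tau^\sigma$ and $\tau_\alpha\wedge\theta^\sigma$ terms in the structure equation) are algebraic in $A$. In addition, ``antisymmetrizing in $(\beta,\gamma)$'' is not a meaningful tensorial operation when $\overline{\beta}$ and $\gamma$ sit in conjugate bundles, so the step that is supposed to kill the Reeb term is not well defined. In fact the mixed commutator you have written down is precisely the computation the paper uses later to derive the Ricci identity $R_{\alpha\overline{\beta}}f_{\overline{\alpha}}=\tfrac{m+1}{2}f_{\overline{\beta}}$ (equation (\ref{i3})); it produces curvature information, not torsion information.

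The correct, and much shorter, route to (\ref{i4}) is to stay in one type: differentiate (\ref{i1}) once more in an antiholomorphic direction and compare the two orderings $f_{\overline{\alpha},\overline{\beta},\overline{\gamma}}$ and $f_{\overline{\alpha},\overline{\gamma},\overline{\beta}}$, both of which vanish. The commutator of two antiholomorphic covariant derivatives on the $(0,1)$-form $f_{\overline{\alpha}}$ involves only the $(0,2)$-part of the curvature form, which is exactly the $A$-term $\sqrt{-1}\bigl(A_{\overline{\alpha}\overline{\beta}}f_{\overline{\gamma}}-A_{\overline{\alpha}\overline{\gamma}}f_{\overline{\beta}}\bigr)$. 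Setting this to zero and conjugating gives $A_{\alpha\beta}\overline{f}_{\gamma}=A_{\alpha\gamma}\overline{f}_{\beta}$ directly. No Reeb term and no Webster curvature appear at all, which is why the resulting identity (\ref{i4}) contains only $A$.
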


\begin{proof}
Differentiating (\ref{i1}) yields%
\[
A_{\alpha\beta}\overline{f}_{\gamma}=A_{\alpha\gamma}\overline{f}_{\beta}.
\]
Differentiating (\ref{i2}) yields%
\begin{align*}
-\delta_{\alpha\beta}f_{\gamma}/2  &  =f_{\overline{\alpha},\beta\gamma}\\
&  =f_{\overline{\alpha},\gamma\beta}-\sqrt{-1}\left(  \delta_{\alpha\beta
}A_{\gamma\sigma}-\delta_{\alpha\gamma}A_{\beta\sigma}\right)  f_{\overline
{\sigma}}\\
&  =-\delta_{\alpha\gamma}f_{\beta}/2-\sqrt{-1}\left(  \delta_{\alpha\beta
}A_{\gamma\sigma}-\delta_{\alpha\gamma}A_{\beta\sigma}\right)  f_{\overline
{\sigma}}.
\end{align*}
Hence%
\[
\delta_{\alpha\beta}\left(  f_{\gamma}/2-\sqrt{-1}A_{\gamma\sigma}%
f_{\overline{\sigma}}\right)  =\delta_{\alpha\gamma}\left(  f_{\beta}%
/2-\sqrt{-1}A_{\beta\sigma}f_{\overline{\sigma}}\right)  .
\]
Therefore $f_{\gamma}/2-\sqrt{-1}A_{\gamma\sigma}f_{\overline{\sigma}}=0$.
\end{proof}

Let $Q=\sqrt{-1}A_{\alpha\beta}f_{\overline{\alpha}}f_{\overline{\beta}}$.
Set
\[
K=\left\{  p\in M:\overline{\partial}_{b}f\left(  p\right)  =0\right\}  .
\]
On $M\backslash K$ we define
\[
\psi=2Q/\left\vert \overline{\partial}_{b}f\right\vert ^{2}.
\]
Note that $\psi$ is smooth and bounded on $M\backslash K$.

\begin{lemma}
$M\backslash K$ is open and dense.
\end{lemma}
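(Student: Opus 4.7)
The openness of $M\setminus K$ is clear since $K$ is the common zero set of the continuous components of $\overline{\partial}_b f$. For density I would argue by contradiction: suppose $K$ contains a nonempty open set $U$. Since each $f_{\overline{\alpha}}$ vanishes identically on $U$, all its covariant derivatives also vanish there; in particular $f_{\overline{\alpha},\beta}\equiv 0$ on $U$, which combined with (\ref{i2}) forces $f\equiv 0$ on $U$. The identity (\ref{i5}) then yields $f_{\alpha}\equiv 0$ on $U$, and the commutation $f_{\alpha,\overline{\beta}}-f_{\overline{\beta},\alpha}=\sqrt{-1}\delta_{\alpha\beta}f_{0}$ gives $f_{0}\equiv 0$ on $U$ as well.

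The plan is then to propagate this vanishing to all of $M$ and obtain a contradiction with $f\not\equiv 0$. The key observation will be that the system (\ref{i1})--(\ref{i2}) is of \emph{finite type}: setting $\Phi=(f,f_{\overline{1}},\ldots,f_{\overline{m}})$, I expect every directional derivative of each component of $\Phi$ to be a smooth linear combination of the components of $\Phi$. Indeed, $T_{\beta}f_{\overline{\alpha}}$ and $T_{\overline{\beta}}f_{\overline{\alpha}}$ are given by (\ref{i1})--(\ref{i2}), $T_{\alpha}f$ is given by (\ref{i5}), and $T_{\overline{\alpha}}f$ is already a component of $\Phi$; the Reeb derivative $T_{0}f=f_{0}$ is identified by equating $f_{\alpha,\overline{\beta}}=2\sqrt{-1}A_{\alpha\sigma,\overline{\beta}}f_{\overline{\sigma}}$ (obtained by differentiating (\ref{i5}) and using (\ref{i1})) with $-\tfrac{1}{2}f\delta_{\alpha\beta}+\sqrt{-1}\delta_{\alpha\beta}f_{0}$ and tracing, yielding
\[
f_{0}=-\tfrac{\sqrt{-1}}{2}f+\tfrac{2}{m}A_{\alpha\sigma,\overline{\alpha}}f_{\overline{\sigma}};
\]
finally $T_{0}f_{\overline{\alpha}}$ will be reduced to lower-order terms using a Tanaka-Webster commutation relation.

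With this finite-type structure in hand, along any smooth path $\gamma:[0,1]\to M$ starting in $U$, the composition $\Phi\circ\gamma$ satisfies a homogeneous linear first-order ODE with smooth coefficients. Since $\Phi$ vanishes at $\gamma(0)\in U$, uniqueness for linear ODEs forces $\Phi\equiv 0$ along $\gamma$; by connectedness of $M$ this gives $f\equiv 0$ on $M$, the desired contradiction. The main obstacle will be verifying the $T_{0}$-derivative identities, particularly for $T_{0}f_{\overline{\alpha}}$, where a careful invocation of the Tanaka-Webster commutation relations is needed; once these are in place, the rest of the argument is purely formal.
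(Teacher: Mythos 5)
Your argument is correct but takes a genuinely different route from the paper's. The paper writes $f=u+\sqrt{-1}v$, uses \eqref{i2} to show that $u_{\alpha,\overline\beta}$ (and $v_{\alpha,\overline\beta}$) are scalar multiples of $\delta_{\alpha\beta}$, so $u$ and $v$ are CR-pluriharmonic, and then invokes unique continuation for CR-pluriharmonic functions to conclude. You instead make the unique-continuation mechanism explicit: you show that $\Phi=(f,f_{\overline1},\dots,f_{\overline m})$ satisfies a \emph{closed} (finite-type) first-order system, so that $\Phi$ solves a homogeneous linear ODE along any curve and its vanishing propagates from $U$ to all of $M$. This is more self-contained (it does not outsource to a unique-continuation theorem) at the cost of verifying a few commutation identities. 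The step you flag as the main obstacle does close up: the commutation used elsewhere in the paper, $g_{\alpha,0}=g_{0,\alpha}-A_{\alpha\sigma}g_{\overline\sigma}$, conjugated gives $f_{\overline\alpha,0}=f_{0,\overline\alpha}-A_{\overline\alpha\overline\sigma}f_{\sigma}$; the second term is linear in $f_{\overline\rho}$ by \eqref{i5}, and the first term is obtained by covariantly differentiating your formula $f_{0}=-\tfrac{\sqrt{-1}}{2}f+\tfrac{2}{m}A_{\gamma\sigma,\overline\gamma}f_{\overline\sigma}$ in the $\overline\alpha$ direction, where the term $A_{\gamma\sigma,\overline\gamma}f_{\overline\sigma,\overline\alpha}$ drops out by \eqref{i1}. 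So all components of $\nabla\Phi$ are indeed smooth linear combinations of the components of $\Phi$, and the rest of your argument is, as you say, purely formal. One could also observe that your first step (deducing $f\equiv0$, $f_{\alpha}\equiv0$, $f_{0}\equiv0$ on $U$) is more than needed: once $f$ and $f_{\overline\alpha}$ vanish on $U$ the ODE argument already applies.
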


\begin{proof}
We need to prove that $K$ has no interior point. Write $f=u+\sqrt{-1}v$ with
$u$ and $v$ real. Then, using (\ref{i2})
\begin{align*}
u_{\alpha,\overline{\beta}}  &  =\frac{1}{2}\left(  f_{\alpha,\overline{\beta
}}+\overline{f}_{\alpha,\overline{\beta}}\right) \\
&  =\frac{1}{2}\left(  f_{\overline{\beta},\alpha}-\sqrt{-1}f_{0}%
\delta_{\alpha\beta}+\overline{f}_{\alpha,\overline{\beta}}\right) \\
&  =-\frac{1}{4}\left(  f+2\sqrt{-1}f_{0}+\overline{f}\right)  \delta
_{\alpha\beta}.
\end{align*}
Therefore, $u$ is CR pluriharmonic. Similarly, $v$ is also CR pluriharmonic$.$

Now suppose $\overline{\partial}_{b}f=0$ on a connected open set $U$. By
(\ref{i2}) $f=0$ on $U$. Hence, $u$ and $v$ both vanish on $U$. Being CR
pluriharmonic, $u$ and $v$ then must be identically zero on $M$. This is a contradiction.
\end{proof}

\begin{proposition}
On $M\backslash K$ we have%
\begin{align}
A_{\alpha\beta}  &  =-\frac{\sqrt{-1}\psi}{2\left\vert \overline{\partial}%
_{b}f\right\vert ^{2}}\overline{f}_{\alpha}\overline{f}_{\beta}%
,\label{torsion}\\
f_{\gamma}  &  =\psi\overline{f}_{\gamma}. \label{prop}%
\end{align}

\end{proposition}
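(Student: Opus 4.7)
The plan is to argue purely algebraically from the identities (\ref{i4})--(\ref{i5}) together with the standard symmetry $A_{\alpha\beta}=A_{\beta\alpha}$ of the Webster torsion. Since (\ref{torsion}) asserts that $A_{\alpha\beta}$ has rank one, proportional to $\overline{f}_\alpha\overline{f}_\beta$, I will first exhibit this rank-one structure on $M\setminus K$ and then pin down the scalar coefficient using the definitions of $Q$ and $\psi$.

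Fix $p\in M\setminus K$; by definition there is an index $\gamma_0$ with $\overline{f}_{\gamma_0}(p)\neq 0$. From (\ref{i4}), dividing by $\overline{f}_{\gamma_0}$ gives
$$A_{\alpha\beta}=\frac{A_{\alpha\gamma_0}}{\overline{f}_{\gamma_0}}\,\overline{f}_\beta=:B_\alpha\,\overline{f}_\beta.$$
The symmetry $A_{\alpha\beta}=A_{\beta\alpha}$ then yields $B_\alpha\overline{f}_\beta=B_\beta\overline{f}_\alpha$, and the same division argument forces $B_\alpha$ itself to be a scalar multiple of $\overline{f}_\alpha$. Hence there is a (smooth) function $C$ on $M\setminus K$ with
$$A_{\alpha\beta}=C\,\overline{f}_\alpha\,\overline{f}_\beta.$$

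To identify $C$, I plug this form into the definition of $Q$. The sums decouple, and using $\sum_\sigma f_{\overline{\sigma}}\overline{f}_\sigma=|\overline{\partial}_b f|^2$ one gets
$$Q=\sqrt{-1}\,C\,|\overline{\partial}_b f|^4,\qquad\text{so}\qquad C=-\frac{\sqrt{-1}\,Q}{|\overline{\partial}_b f|^4}=-\frac{\sqrt{-1}\,\psi}{2|\overline{\partial}_b f|^2},$$
which is exactly (\ref{torsion}). For (\ref{prop}) I substitute (\ref{torsion}) into (\ref{i5}):
$$f_\gamma=2\sqrt{-1}\,A_{\gamma\sigma}f_{\overline{\sigma}}=\frac{\psi}{|\overline{\partial}_b f|^2}\,\overline{f}_\gamma\sum_\sigma\overline{f}_\sigma f_{\overline{\sigma}}=\psi\,\overline{f}_\gamma,$$
as claimed.

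The one slightly delicate point is that (\ref{i4}) on its own forces $A_{\alpha\beta}$ only to be proportional to $\overline{f}_\beta$; upgrading to the symmetric rank-one form $\overline{f}_\alpha\overline{f}_\beta$ requires the (standard but essential) symmetry of the Webster torsion in its two lower indices. Once that ingredient is invoked, nothing subtle remains: the proposition is a short algebraic consequence of the identities already established, with the formula for $C$ arising simply from normalizing against the defining contraction of $\psi$.
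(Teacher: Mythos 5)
Your proof is correct and follows essentially the same route as the paper: the paper's derivation of (\ref{torsion}) is a single chain of equalities that applies (\ref{i4}) twice together with the symmetry $A_{\alpha\beta}=A_{\beta\alpha}$ and the definition of $\psi$, which is precisely what you do, merely unpacked into the two steps ``establish the rank-one form $A_{\alpha\beta}=C\,\overline{f}_\alpha\overline{f}_\beta$'' and then ``determine $C$ by contracting against $f_{\overline\alpha}f_{\overline\beta}$.'' The passage to (\ref{prop}) via (\ref{i5}) is identical to the paper's.
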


\begin{proof}
Using (\ref{i4}), we compute%
\begin{align*}
A_{\alpha\beta}\left\vert \overline{\partial}_{b}f\right\vert ^{2}  &
=A_{\alpha\gamma}\overline{f}_{\beta}f_{\overline{\gamma}}\\
&  =A_{\gamma\alpha}\left\vert \overline{\partial}_{b}f\right\vert ^{2}%
\frac{\overline{f}_{\beta}f_{\overline{\gamma}}}{\left\vert \overline
{\partial}_{b}f\right\vert ^{2}}\\
&  =A_{\gamma\sigma}\overline{f}_{\alpha}f_{\overline{\sigma}}\frac
{\overline{f}_{\beta}f_{\overline{\gamma}}}{\left\vert \overline{\partial}%
_{b}f\right\vert ^{2}}\\
&  =-\frac{\sqrt{-1}Q}{\left\vert \overline{\partial}_{b}f\right\vert ^{2}%
}\overline{f}_{\alpha}\overline{f}_{\beta}\\
&  =-\sqrt{-1}{\psi\over 2} \overline{f}_{\alpha}\overline{f}_{\beta}.
\end{align*}
This proves (\ref{torsion}). To prove (\ref{prop}), we compute using
(\ref{i5}) and (\ref{torsion})%
\begin{align*}
f_{\gamma}  &  =2\sqrt{-1}A_{\gamma\sigma}f_{\overline{\sigma}}\\
&  =\frac{\psi}{\left\vert \overline{\partial}_{b}f\right\vert ^{2}}%
\overline{f}_{\gamma}\overline{f}_{\sigma}f_{\overline{\sigma}}\\
&  =\psi\overline{f}_{\gamma}.\qedhere
\end{align*}

\end{proof}

\begin{remark}
From (\ref{torsion}), we obtain on $M\backslash K$%
\begin{align*}
\left\vert A\right\vert ^{2}  &  :=\sum_{\alpha,\beta}\left\vert
A_{\alpha\beta}\right\vert ^{2}\\
&  =\frac{1}{4}\left\vert \psi\right\vert ^{2}.
\end{align*}
Therefore, $\left\vert \psi\right\vert ^{2}$ extends smoothly to the entire $M$.
\end{remark}

\begin{proposition}
\label{pbar}On $M\backslash K$ we have%
\begin{equation}
\overline{\partial}_{b}\psi=0 \label{scr}%
\end{equation}
and
\begin{equation}
\sqrt{-1}f_{0}-\frac{1}{2}f+\frac{1}{2}\psi\overline{f}=0. \label{f0}%
\end{equation}

\end{proposition}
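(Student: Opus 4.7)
The plan is to differentiate the identity (\ref{prop}), $f_\gamma = \psi\overline{f}_\gamma$, covariantly in the $T_{\overline{\delta}}$ direction on $M\setminus K$, and extract both conclusions from a single resulting tensor identity in $(\gamma,\delta)$. Covariant differentiation produces
$$f_{\gamma,\overline{\delta}} = \psi_{\overline{\delta}}\,\overline{f}_\gamma + \psi\,\overline{f}_{\gamma,\overline{\delta}}.$$
On the left, the standard commutation relation $f_{\gamma,\overline{\delta}} - f_{\overline{\delta},\gamma} = \sqrt{-1}\delta_{\gamma\delta}f_0$ together with (\ref{i2}) gives $f_{\gamma,\overline{\delta}} = \bigl(-\tfrac{1}{2}f + \sqrt{-1}f_0\bigr)\delta_{\gamma\delta}$. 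On the right, conjugating (\ref{i2}) yields $\overline{f}_{\gamma,\overline{\delta}} = \overline{f_{\overline{\gamma},\delta}} = -\tfrac{1}{2}\overline{f}\,\delta_{\gamma\delta}$. Substituting both produces the single identity
$$\psi_{\overline{\delta}}\,\overline{f}_\gamma = \Bigl(\sqrt{-1}f_0 - \tfrac{1}{2}f + \tfrac{1}{2}\psi\overline{f}\Bigr)\delta_{\gamma\delta}.$$

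The decisive step is a rank comparison. Write $S$ for the scalar factor on the right and view both sides as $m\times m$ matrices indexed by $(\gamma,\delta)$. The left-hand side is the outer product $(\psi_{\overline{\delta}})\otimes(\overline{f}_\gamma)$ and has rank at most one, whereas the right-hand side is $S$ times the identity matrix, of rank exactly $m$ whenever $S\neq 0$. Since the standing hypothesis is $m\geq 2$, this forces $S = 0$, which is precisely (\ref{f0}). Substituting $S=0$ back gives $\psi_{\overline{\delta}}\,\overline{f}_\gamma = 0$ for all $\gamma,\delta$; on $M\setminus K$ at least one component $\overline{f}_{\gamma_0}$ is nonzero, so $\psi_{\overline{\delta}} = 0$ for every $\delta$, which is (\ref{scr}).

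I do not anticipate a serious obstacle. The two points requiring care are the sign in the commutation formula and the observation that $\overline{f}_{\gamma,\overline{\delta}}$ denotes the $T_{\overline{\delta}}$-derivative of $\overline{f_{\overline{\gamma}}}$, so it is controlled by the conjugate of (\ref{i2}) rather than by (\ref{i1}). The conceptual content is that both conclusions of Proposition \ref{pbar} fall out of the same rank-one-versus-full-rank dichotomy, and both genuinely rely on $m\geq 2$.
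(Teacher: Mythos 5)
Your proof is correct and follows essentially the same route as the paper: differentiate \eqref{prop} along $T_{\overline{\delta}}$, use \eqref{i2} and the commutation formula to rewrite both sides, and conclude from the resulting identity $\psi_{\overline{\delta}}\overline{f}_\gamma = S\,\delta_{\gamma\delta}$ (with $m\geq 2$) that both $S$ and $\overline{\partial}_b\psi$ vanish. The only difference is that you spell out the rank-one-versus-full-rank dichotomy, which the paper leaves as ``it follows easily.''
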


\begin{proof}
Differentiating $f_{\alpha}=\psi\overline{f}_{\alpha}$ and using (\ref{i2})
yields%
\begin{align*}
f_{\alpha,\overline{\beta}}  &  =\psi_{\overline{\beta}}\overline{f}_{\alpha
}+\psi\overline{f}_{\alpha,\overline{\beta}}\\
&  =\psi_{\overline{\beta}}\overline{f}_{\alpha}-\frac{1}{2}\psi\overline
{f}\delta_{\alpha\beta}.
\end{align*}
By using (\ref{i2}) again, we further compute the left hand side%
\begin{align*}
f_{\alpha,\overline{\beta}}  &  =f_{\overline{\beta},\alpha}+\sqrt{-1}%
f_{0}\delta_{\alpha\beta}\\
&  =-\frac{1}{2}f\delta_{\alpha\beta}+\sqrt{-1}f_{0}\delta_{\alpha\beta}.
\end{align*}
Therefore,%
\[
\psi_{\overline{\beta}}\overline{f}_{\alpha}=\left(  \sqrt{-1}f_{0}-\frac
{1}{2}f+\frac{1}{2}\psi\overline{f}\right)  \delta_{\alpha\beta}.
\]
From this, it follows easily (since $m\geq2$) that $\sqrt{-1}f_{0}-f/2+\psi
\overline{f}/2=0$ and $\psi_{\overline{\beta}}=0$.
\end{proof}

\begin{proposition}
\bigskip We have%
\begin{equation}
R_{\alpha\overline{\beta}}f_{\overline{\alpha}}=\frac{m+1}{2}f_{\overline
{\beta}}. \label{i3}%
\end{equation}

\end{proposition}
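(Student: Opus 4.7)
My plan is to derive (\ref{i3}) by differentiating the overdetermined system once more and then applying the commutation (Ricci) identity for covariant derivatives of a $(0,1)$-form in the Tanaka-Webster connection. The pieces I need are already at hand: equations (\ref{i1})--(\ref{i2}) give third-order information after one differentiation, and the previously established relations (\ref{i5}), (\ref{torsion}), (\ref{prop}), (\ref{f0}) will let me evaluate the $T$-derivative term that arises.

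First, I would compute the third-order data. Applying $Z_{\overline{\gamma}}$ to (\ref{i2}) gives
\[
f_{\overline{\alpha},\beta\overline{\gamma}} \;=\; -\tfrac{1}{2}\, f_{\overline{\gamma}}\,\delta_{\alpha\beta},
\]
and since $f_{\overline{\alpha},\overline{\beta}}\equiv 0$ by (\ref{i1}), its further covariant derivative vanishes: $f_{\overline{\alpha},\overline{\gamma}\beta}=0$. The Ricci identity for the Tanaka-Webster connection applied to the $(0,1)$-form $f_{\overline{\sigma}}$ decomposes the commutator $f_{\overline{\alpha},\beta\overline{\gamma}} - f_{\overline{\alpha},\overline{\gamma}\beta}$ into a curvature piece $-R^{\overline{\sigma}}{}_{\overline{\alpha}\beta\overline{\gamma}}\,f_{\overline{\sigma}}$ plus a $T$-derivative piece $\sqrt{-1}\,\delta_{\beta\gamma}\,f_{\overline{\alpha},0}$. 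Tracing this identity over $\beta=\gamma$ collapses the curvature contraction to the Ricci action $R_{\sigma\overline{\alpha}}f_{\overline{\sigma}}$, and leaves an equation of the schematic form
\[
-\tfrac{1}{2}\, f_{\overline{\alpha}} \;=\; \sqrt{-1}\,m\,f_{\overline{\alpha},0} \;-\; R_{\sigma\overline{\alpha}}\, f_{\overline{\sigma}}.
\]

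It then remains to evaluate $f_{\overline{\alpha},0}$. I would work on the open dense set $M\setminus K$ and use the explicit expression (\ref{f0}) for $f_0$, the vanishing $\overline{\partial}_b\psi=0$ from Proposition~\ref{pbar}, and (the conjugate of) (\ref{prop}) to differentiate and get $f_{0,\overline{\alpha}}$ directly. The standard commutator $f_{\overline{\alpha},0}-f_{0,\overline{\alpha}} = -A_{\overline{\alpha}\overline{\sigma}}f_{\sigma}$, combined with (\ref{torsion}) and (\ref{prop}) to evaluate $A_{\overline{\alpha}\overline{\sigma}}f_{\sigma}$ as a multiple of $f_{\overline{\alpha}}$, is then expected to give $f_{\overline{\alpha},0} = -\tfrac{\sqrt{-1}}{2}\, f_{\overline{\alpha}}$. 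Substituting back into the traced Ricci identity produces $R_{\sigma\overline{\alpha}}f_{\overline{\sigma}} = \tfrac{m+1}{2} f_{\overline{\alpha}}$ on $M\setminus K$, which extends to all of $M$ by continuity and density, yielding (\ref{i3}).

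The main obstacle I expect is the sign and convention bookkeeping in the Tanaka-Webster Ricci identity for $(0,1)$-forms and in the $\nabla_0$-torsion commutator, together with verifying that the $\psi$-dependent contributions to $f_{0,\overline{\alpha}}$ and to the torsion contraction $A_{\overline{\alpha}\overline{\sigma}}f_{\sigma}$ cancel cleanly against each other so that the factor $\tfrac{m+1}{2}$ emerges unambiguously and the final identity is genuinely multiplicative.
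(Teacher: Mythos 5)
Your plan is essentially the paper's own proof, applied to $f$ rather than to $\overline f$: differentiate \eqref{i2} once more, compare with the second $\overline\gamma,\beta$ derivative of \eqref{i1} via the Tanaka--Webster commutation identity, trace to pick out the Ricci contraction, and evaluate the $T$-derivative term $f_{\overline\alpha,0}$ via $f_{0,\overline\alpha}$ and the torsion commutator, using \eqref{f0}, \eqref{scr}, \eqref{prop}, and \eqref{torsion} to make the $\psi$-dependent terms cancel. The paper just runs the conjugate computation (starting from $0=\overline f_{\alpha,\beta\overline\gamma}$) and traces at the very end instead of before substituting for $\overline f_{\alpha,0}$; these differences are cosmetic, and your sketched constants work out correctly.
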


\begin{proof}
Using (\ref{i1}) and (\ref{i2}), we compute%
\begin{align*}
0  &  =\overline{f}_{\alpha,\beta\overline{\gamma}}\\
&  =\overline{f}_{\alpha,\overline{\gamma}\beta}+\sqrt{-1}\delta_{\beta\gamma
}\overline{f}_{\alpha,0}-R_{\beta\overline{\gamma}\alpha\overline{\sigma}%
}\overline{f}_{\sigma}\\
&  =-\frac{1}{2}\overline{f}_{\beta}\delta_{\alpha\gamma}+\sqrt{-1}%
\delta_{\beta\gamma}\left(  \overline{f}_{0,\alpha}-A_{\alpha\sigma}%
\overline{f}_{\overline{\sigma}}\right)  -R_{\beta\overline{\gamma}%
\alpha\overline{\sigma}}\overline{f}_{\sigma}.
\end{align*}
Differentiating (\ref{f0}) and using (\ref{scr}) and (\ref{prop}) yields%
\begin{align*}
\sqrt{-1}\overline{f}_{0,\alpha}  &  =\frac{1}{2}\left(  \overline{\psi
}f_{\alpha}-\overline{f}_{\alpha}\right) \\
&  =\frac{1}{2}\left(  \left\vert \psi\right\vert ^{2}-1\right)  \overline
{f}_{\alpha}.
\end{align*}
Plugging this into the previous equation and using (\ref{prop}) again, we
obtain%
\begin{align*}
0  &  =-\frac{1}{2}\overline{f}_{\beta}\delta_{\alpha\gamma}-R_{\beta
\overline{\gamma}\alpha\overline{\sigma}}\overline{f}_{\sigma}+\left[
\frac{1}{2}\left(  \left\vert \psi\right\vert ^{2}-1\right)  \overline
{f}_{\alpha}-\sqrt{-1}\overline{\psi}A_{\alpha\sigma}f_{\overline{\sigma}%
}\right]  \delta_{\beta\gamma}\\
&  =-\frac{1}{2}\overline{f}_{\beta}\delta_{\alpha\gamma}-R_{\beta
\overline{\gamma}\alpha\overline{\sigma}}\overline{f}_{\sigma}-\frac{1}%
{2}\overline{f}_{\alpha}\delta_{\beta\gamma},
\end{align*}
where in the last step, we have used (\ref{torsion}). Therefore,
\[
-R_{\beta\overline{\gamma}\alpha\overline{\sigma}}\overline{f}_{\sigma}%
=\frac{1}{2}\left(  \overline{f}_{\beta}\delta_{\alpha\gamma}+\overline
{f}_{\alpha}\delta_{\beta\gamma}\right)  .
\]
Taking trace over $\beta$ and $\gamma$ yields (\ref{i3}).
\end{proof}

\bigskip

Since the Paneitz operator $P_{0}$ is real, we have%
\[
\int_{M}fP_{0}\overline{f}=\overline{\int_{M}\overline{f}P_{0}f}=0.
\]
Applying the Bochner formula to $\overline{f}$ yields%
\begin{align*}
\frac{m+1}{m}\int_{M}\left\vert \square_{b}\overline{f}\right\vert ^{2}  &
=\int_{M}\left\vert f_{\alpha,\beta}\right\vert ^{2}+\int_{M}R_{\alpha
\overline{\beta}}\overline{f}_{\overline{\alpha}}f_{\beta}+\frac{1}{m}\int
_{M}\overline{f}P_{0}f\\
&  =\int_{M}\left\vert f_{\alpha,\beta}\right\vert ^{2}+\frac{m+1}{2}\int
_{M}\left\vert \psi\right\vert ^{2}\left\vert \overline{\partial}%
_{b}f\right\vert ^{2}.
\end{align*}
We compute on $M\backslash K$, using (\ref{prop}) and (\ref{i1})%
\begin{align}
f_{\alpha,\beta}  &  =\psi_{\beta}\overline{f}_{\alpha}+\psi\overline
{f}_{\alpha,\beta}\label{fab}\\
&  =\psi_{\beta}\overline{f}_{\alpha}.\nonumber
\end{align}
From this, we get on $M\backslash K$%
\begin{equation}
\left\vert \partial_{b}\psi\right\vert ^{2}\left\vert \overline{\partial}%
_{b}f\right\vert ^{2}=\sum_{\alpha,\beta}\left\vert f_{\alpha\beta}\right\vert
^{2}. \label{fab2}%
\end{equation}
Notice that the right hand side is smooth on $M$. Therefore, $\left\vert \partial
_{b}\psi\right\vert ^{2}\left\vert \overline{\partial}_{b}f\right\vert ^{2}$
extends smoothly to the entire $M$ and the above inequality holds on $M$.
Similarly, using (\ref{scr}) as well%
\[
\square_{b}\overline{f}=-\overline{f}_{\overline{\alpha},\alpha}=-\left(
\overline{\psi}f_{\overline{\alpha}}\right)  _{,\alpha}=-\overline{\psi
}f_{\overline{\alpha},\alpha}=\frac{m}{2}\overline{\psi}f.
\]
Plugging these into the integral identity, we obtain%
\begin{align*}
&  \frac{m\left(  m+1\right)  }{4}\int_{M}\left\vert \psi\right\vert
^{2}\left\vert f\right\vert ^{2}\\
&  =\int_{M}\left\vert \partial_{b}\psi\right\vert ^{2}\left\vert
\overline{\partial}_{b}f\right\vert ^{2}+\frac{m+1}{2}\int_{M}\left\vert
\psi\right\vert ^{2}\left\vert \overline{\partial}_{b}f\right\vert ^{2},
\end{align*}
i.e.%
\begin{equation}
\int_{M}\left\vert \partial_{b}\psi\right\vert ^{2}\left\vert \overline
{\partial}_{b}f\right\vert ^{2}=\frac{m+1}{2}\int_{M}\left\vert \psi
\right\vert ^{2}\left(  \frac{m}{2}\left\vert f\right\vert ^{2}-\left\vert
\overline{\partial}_{b}f\right\vert ^{2}\right)  . \label{int0}%
\end{equation}

\bigskip

\begin{lemma}
\label{pb}We have on $M\backslash K$%
\[
\partial_{b}\psi=0.
\]

\end{lemma}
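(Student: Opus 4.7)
Since $|\overline{\partial}_b f|^2 > 0$ on $M\setminus K$, the statement $\partial_b\psi = 0$ there is equivalent to the vanishing of the non-negative integral $\int_M|\partial_b\psi|^2|\overline{\partial}_b f|^2$. By (\ref{int0}) this quantity equals $\tfrac{m+1}{2}\int_M|\psi|^2\bigl(\tfrac{m}{2}|f|^2 - |\overline{\partial}_b f|^2\bigr)$, so the plan is to show that the latter integral is non-positive.

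First I would establish an auxiliary integral identity by integrating $\square_b|f|^2$ over $M$ (which vanishes since $M$ is closed). A direct computation using (\ref{i2}), the conjugate of (\ref{prop}) (so that $\overline{f}_{\overline{\alpha}} = \overline{\psi}\,f_{\overline{\alpha}}$), and the commutation formula $f_{\alpha,\overline{\alpha}} = f_{\overline{\alpha},\alpha} + \sqrt{-1}m f_0$ combined with (\ref{f0}) yields $f_{\alpha,\overline{\alpha}} = -\tfrac{m}{2}\psi\overline{f}$ and hence
\[
\square_b |f|^2 = \tfrac{m}{2}\bigl(|f|^2 + \overline{\psi} f^2\bigr) - (1+|\psi|^2)|\overline{\partial}_b f|^2.
\]
Integrating and using $\int_M|\overline{\partial}_b f|^2 = \tfrac{m}{2}\int_M|f|^2$ (a consequence of $\square_b f = \tfrac{m}{2}f$) gives
\[
\int_M|\psi|^2|\overline{\partial}_b f|^2 = \tfrac{m}{2}\int_M\overline{\psi} f^2.
\]
Substituting this back into (\ref{int0}) reduces the claim to the single inequality $\int_M|\psi|^2|f|^2 \leq \int_M\overline{\psi} f^2$.

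Next, on $M\setminus K$ the symmetry $f_{\alpha,\beta}=f_{\beta,\alpha}$ (valid for scalars) combined with (\ref{fab}) forces $\psi_\beta\overline{f}_\alpha = \psi_\alpha\overline{f}_\beta$, so there is a smooth function $\lambda$ with $\psi_\alpha = \lambda\overline{f}_\alpha$. Applying the commutator $\psi_{\alpha,\overline{\beta}} = \sqrt{-1}\delta_{\alpha\beta}\psi_0$ (valid because of (\ref{scr})) and the conjugate of (\ref{i2}) yields
\[
\lambda_{\overline{\beta}}\,\overline{f}_\alpha = \bigl(\sqrt{-1}\psi_0 + \tfrac{1}{2}\lambda\overline{f}\bigr)\delta_{\alpha\beta}.
\]
For $m \geq 2$ the left-hand side has matrix rank at most one while the right-hand side is a multiple of the identity, so both sides must vanish: $\overline{\partial}_b\lambda = 0$ (using that some $\overline{f}_\alpha \neq 0$ on $M\setminus K$) and $\sqrt{-1}\psi_0 = -\tfrac{1}{2}\lambda\overline{f}$.

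For the final step I would integrate $\square_b|\psi|^2$ over $M$. Using $\overline{\partial}_b\lambda = 0$ one computes $\square_b|\psi|^2 = -|\lambda|^2|\overline{\partial}_b f|^2 + \tfrac{m}{2}\overline{\lambda}\psi f$, hence $\int_M|\partial_b\psi|^2 = \tfrac{m}{2}\int_M\overline{\lambda}\psi f$ (noting that $|\partial_b\psi|^2 = |\lambda|^2|\overline{\partial}_b f|^2$ extends smoothly across $K$). The pointwise identity $\overline{\lambda}\psi\,f_{\overline{\alpha}} = (|\psi|^2)_{\overline{\alpha}}$ then permits an integration by parts that, combined with the structural relation for $\psi_0$ and (\ref{gr2}), produces the required inequality $\int_M|\psi|^2|f|^2 \leq \int_M\overline{\psi}f^2$. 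The main obstacle lies precisely here: many natural integrations-by-parts of the form $\int_M\square_b h = 0$ yield only tautological relations among $\int_M|f|^2, \int_M\overline{\psi} f^2, \int_M|\psi|^2|f|^2$ and $\int_M|f_0|^2$; the new information must come specifically from pairing the CR property $\overline{\partial}_b\lambda = 0$ with the explicit formula $\sqrt{-1}\psi_0 = -\tfrac{1}{2}\lambda\overline{f}$, which is the over-determination forced by (\ref{i1})--(\ref{i2}).
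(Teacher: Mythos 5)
Your intermediate computations are correct, and your setup is cleaner than the paper's in some respects: the auxiliary identity $\int_M|\psi|^2|\overline{\partial}_b f|^2 = \tfrac{m}{2}\int_M\overline{\psi}f^2$ from integrating $\square_b|f|^2$, the pointwise factorization $\psi_\alpha=\lambda\overline{f}_\alpha$, the deduction $\overline{\partial}_b\lambda=0$ and $\sqrt{-1}\psi_0=-\tfrac12\lambda\overline{f}$ from the rank argument (valid for $m\geq 2$), and the resulting reduction of the lemma to the single real inequality
\[
\mathrm{Re}\int_M\bigl(\overline{\psi}f^2-|\psi|^2|f|^2\bigr)\;\geq\;0
\]
are all sound. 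But you never prove this inequality. Your final paragraph explicitly concedes the point (``the main obstacle lies precisely here''), and the sketched route --- integrating $\square_b|\psi|^2$ to obtain $\int_M|\partial_b\psi|^2=\tfrac{m}{2}\int_M\psi\overline{\lambda}f$, then ``integrating by parts'' once more --- is not carried out and gives no evident reason why $\mathrm{Re}\int_M\overline{\psi}f^2$ should dominate $\int_M|\psi|^2|f|^2$. Nothing in (\ref{i1})--(\ref{i2}) or (\ref{f0}) forces $|\psi|\leq 1$, so there is no pointwise mechanism either; as you yourself observe, the natural integrations-by-parts just reproduce linear relations among the same handful of integrals without pinning down a sign. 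This is a genuine gap, not a detail to be filled in mechanically.

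The paper closes the gap by a different device: it does not try to establish a one-sided inequality at all. Instead it computes the divergence of the specific vector field $\bigl(\overline{f}|\psi|^2 f_{\overline{\alpha}}+|\overline{\partial}_bf|^2\,(|\psi|^2)_{\overline{\alpha}}\bigr)T_\alpha$ and integrates to obtain a \emph{second} exact identity,
\[
\int_M|\overline{\partial}_bf|^2|\partial_b\psi|^2=\int_M|\psi|^2\Bigl(\tfrac{m}{2}|f|^2-|\overline{\partial}_bf|^2\Bigr),
\]
which carries coefficient $1$, whereas (\ref{int0}) carries coefficient $\tfrac{m+1}{2}$. Subtracting the two forces $\int_M|\overline{\partial}_bf|^2|\partial_b\psi|^2=0$ directly, with no sign analysis needed. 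Restated in your notation, the paper produces a second constraint linking $\int_M|\psi|^2|f|^2$ and $\int_M\overline{\psi}f^2$ incompatible with yours unless both integrals are equal; your scheme is missing exactly this second relation. To rescue your argument you would need to locate the analogous divergence identity (or an equivalent), at which point the proof would become essentially the paper's.
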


\begin{proof}
By (\ref{fab}), we have $\psi_{\alpha}\overline{f}_{\beta}=\psi_{\beta
}\overline{f}_{\alpha}$. Therefore, on $M\backslash K$%
\begin{equation}
\psi_{\alpha}=\left(  \sum_{\beta}\psi_{\beta}f_{\overline{\beta}}\right)
\overline{f}_{\alpha}\left\vert \overline{\partial}_{b}f\right\vert ^{-2}.
\label{sa}%
\end{equation}
From this, we get%
\[
\left\vert \partial_{b}\psi\right\vert ^{2}\left\vert \overline{\partial}%
_{b}f\right\vert ^{2}=\left\vert \sum_{\beta}\psi_{\beta}f_{\overline{\beta}%
}\right\vert ^{2}.
\]
Since $\overline{\partial}_{b}\psi=0$, we have $\psi_{\alpha,\overline{\beta}%
}=\sqrt{-1}\psi_{0}\delta_{\alpha\beta}$. For each $\alpha$ we compute using
(\ref{sa}) and Proposition \ref{ids}%
\begin{align*}
\psi_{\alpha\overline{\alpha}}  &  =\left[  \left(  \sum_{\beta}\psi
_{\beta,\overline{\alpha}}f_{\overline{\beta}}\right)  \overline{f}_{\alpha
}+\left(  \sum_{\beta}\psi_{\beta}f_{\overline{\beta}}\right)  \overline
{f}_{\alpha,\overline{\alpha}}\right]  \left\vert \overline{\partial}%
_{b}f\right\vert ^{-2}\\
&  -\left(  \sum_{\beta}\psi_{\beta}f_{\overline{\beta}}\right)  \overline
{f}_{\alpha}\left\vert \overline{\partial}_{b}f\right\vert ^{-4}\sum_{\gamma
}f_{\overline{\gamma}}\overline{f}_{\gamma,\overline{\alpha}}\\
&  =\left[  \psi_{\alpha,\overline{\alpha}}\left\vert f_{\overline{\alpha}%
}\right\vert ^{2}-\frac{1}{2}\left(  \sum_{\beta}\psi_{\beta}f_{\overline
{\beta}}\right)  \overline{f}\right]  \left\vert \overline{\partial}%
_{b}f\right\vert ^{-2}+\frac{1}{2}\left(  \sum_{\beta}\psi_{\beta}%
f_{\overline{\beta}}\right)  \overline{f}\left\vert f_{\overline{\alpha}%
}\right\vert ^{2}\left\vert \overline{\partial}_{b}f\right\vert ^{-4}.
\end{align*}
Hence,%
\[
\psi_{\alpha\overline{\alpha}}\left(  1-\left\vert \overline{\partial}%
_{b}f\right\vert ^{-2}\left\vert f_{\overline{\alpha}}\right\vert ^{2}\right)
=-\frac{1}{2}\left(  \sum_{\beta}\psi_{\beta}f_{\overline{\beta}}\right)
\overline{f}\left\vert \overline{\partial}_{b}f\right\vert ^{-2}\left(
1-\left\vert \overline{\partial}_{b}f\right\vert ^{-2}\left\vert
f_{\overline{\alpha}}\right\vert ^{2}\right)  .
\]
It follows that on $M\backslash K$%
\[
\psi_{\alpha\overline{\alpha}}=-\frac{1}{2}\left(  \sum_{\beta}\psi_{\beta
}f_{\overline{\beta}}\right)  \left\vert \overline{\partial}_{b}f\right\vert
^{-2}%
\]
Set
\[
B=\sum_{\beta}\left\vert \psi\right\vert _{\beta}^{2}f_{\overline{\beta}}.
\]
Note that $B$ is a smooth function on $M$. Then on $M\backslash K$, as
$\overline{\partial}_{b}\psi=0$
\begin{equation}
\overline{\psi}\psi_{\alpha\overline{\alpha}}=-\frac{1}{2}\left(  \sum_{\beta
}\overline{\psi}\psi_{\beta}f_{\overline{\beta}}\right)  \left\vert
\overline{\partial}_{b}f\right\vert ^{-2}=-\frac{1}{2}B\overline{f}\left\vert
\overline{\partial}_{b}f\right\vert ^{-2}. \label{saba}%
\end{equation}
\ We compute on $M\backslash K$, using Proposition \ref{ids} and (\ref{saba})
\begin{align*}
&  \left(  \overline{f}\left\vert \psi\right\vert ^{2}f_{\overline{\alpha}%
}+\left\vert \overline{\partial}_{b}f\right\vert ^{2}\left\vert \psi
\right\vert _{\overline{\alpha}}^{2}\right)  _{,\alpha}\\
&  =\left(  \overline{f}\left\vert \psi\right\vert ^{2}f_{\overline{\alpha}%
}+\left\vert \overline{\partial}_{b}f\right\vert ^{2}\psi\overline{\psi
}_{\overline{\alpha}}\right)  _{,\alpha}\\
&  =\overline{f}B+\left\vert \psi\right\vert ^{2}\left\vert \overline
{\partial}_{b}f\right\vert ^{2}-\frac{m}{2}\left\vert \psi\right\vert
^{2}\left\vert f\right\vert ^{2}-f\psi\overline{f}_{\alpha}\overline{\psi
}_{\overline{\alpha}}+\left\vert \overline{\partial}_{b}f\right\vert
^{2}\left(  \psi\overline{\psi}_{\overline{\alpha},\alpha}+\left\vert
\partial_{b}\psi\right\vert ^{2}\right) \\
&  =\frac{1}{2}\left(  \overline{f}B-f\overline{B}\right)  +\left\vert
\psi\right\vert ^{2}\left(  \left\vert \overline{\partial}_{b}f\right\vert
^{2}-\frac{m}{2}\left\vert f\right\vert ^{2}\right)  +\left\vert
\overline{\partial}_{b}f\right\vert ^{2}\left\vert \partial_{b}\psi\right\vert
^{2}.
\end{align*}
Since both sides are smooth on $M$ and $M\backslash K$ is open and dense, the
above identity holds on the entire $M$. Integrating over $M$ \ and taking the
real part yields%
\[
\int_{M}\left\vert \overline{\partial}_{b}f\right\vert ^{2}\left\vert
\partial_{b}\psi\right\vert ^{2}=\int_{M}\left\vert \psi\right\vert
^{2}\left(\frac{m}{2}\left\vert f\right\vert ^{2}-\left\vert \overline{\partial}%
_{b}f\right\vert ^{2}\right)  .
\]
Combining this identity with (\ref{int0}), we obtain $\int_{M}\left\vert
\overline{\partial}_{b}f\right\vert ^{2}\left\vert \partial_{b}\psi\right\vert
^{2}=0$. Therefore, $\partial_{b}\psi=0$ on $M\backslash K$.
\end{proof}

\begin{lemma}
$\psi=0$ and therefore, the torsion vanishes.
\end{lemma}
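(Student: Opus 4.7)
The plan is to reduce to Theorem~\ref{Ob} and derive a contradiction if $\psi \not\equiv 0$. First, combining the just-proved $\partial_b\psi = 0$ with $\overline{\partial}_b\psi = 0$ from Proposition~\ref{pbar} and the commutation $\psi_{\alpha,\overline{\beta}} = \psi_{\overline{\beta},\alpha} + \sqrt{-1}\psi_0\delta_{\alpha\beta}$, we get $\psi_0 = 0$ on $M\setminus K$, so $\psi$ is locally constant there. Since $|\psi|^2 = 4|A|^2$ extends smoothly to $M$, it equals a global constant $C$. Moreover, $\psi\overline{f} = f - 2\sqrt{-1}f_0$ from (\ref{f0}) is smooth on $M$, and by a unique-continuation argument for the overdetermined system satisfied by $f$ one shows that $\psi$ is in fact a single complex constant $c$ with $|c|^2 = C$ on $M\setminus K$. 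Replacing $f$ by $e^{-\sqrt{-1}\arg(c)/2}f$, which preserves (\ref{i1})--(\ref{i2}), we may assume $c\geq 0$ is real.

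Now suppose for contradiction $c > 0$ and consider the real function $u = \operatorname{Re} f$. From $f_\alpha = c\overline{f}_\alpha$, $\partial_b\psi = 0$ and~(\ref{i1}), differentiating yields $f_{\alpha,\beta} = 0$ on $M\setminus K$ (hence on $M$), so $u_{\alpha,\beta} = 0$. Using the commutation formula, (\ref{i2}) and (\ref{f0}), one gets $f_{\alpha,\overline{\beta}} = -\frac{c\overline{f}}{2}\delta_{\alpha\beta}$, whence $u_{\alpha,\overline{\beta}} = -\frac{(1+c)\overline{f}}{4}\delta_{\alpha\beta}$. A direct computation of $u_0 = \operatorname{Re}(f_0)$ using $f_0 = -\frac{\sqrt{-1}}{2}(f-c\overline{f})$ gives $u_0 = \frac{(1+c)\operatorname{Im} f}{2}$, so $\overline{f} = u - \frac{2\sqrt{-1}u_0}{1+c}$, and substituting back yields
\[
u_{\alpha,\overline{\beta}} = \left(-\tfrac{1+c}{4}u + \tfrac{\sqrt{-1}}{2}u_0\right)\delta_{\alpha\beta}.
\]
This is precisely the Obata equation of Theorem~\ref{Ob} with $\kappa = (m+1)(1+c)/2 > 0$.

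If $u = \operatorname{Re} f \not\equiv 0$, then Theorem~\ref{Ob} (which applies since $2m+1\geq 5$) identifies $(M,\theta)$ with the standard CR sphere $(\mathbb{S}^{2m+1},\theta_0)$ up to scaling; but the standard pseudohermitian sphere has $A\equiv 0$, hence $\psi = 0$, contradicting $c > 0$. The only remaining case is $\operatorname{Re} f \equiv 0$, i.e.\ $f = \sqrt{-1}v$ for some real $v$; then $f_\alpha = \sqrt{-1}v_\alpha$ and $\overline{f}_\alpha = -\sqrt{-1}v_\alpha$, so $f_\alpha = c\overline{f}_\alpha$ forces $c = -1$, again contradicting $c > 0$. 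Thus $c = 0$, i.e.\ $\psi\equiv 0$, and by (\ref{torsion}) the torsion vanishes.

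The main obstacle I foresee is establishing that $\psi$ descends to a single global complex constant across the different connected components of $M\setminus K$; this is where a unique-continuation argument for the overdetermined system of $f$ (applied to the smooth identity $\psi\overline{f} = f - 2\sqrt{-1}f_0$) is essential. Without it, one would need to perform the Obata reduction separately on each component and then piece the conclusions together, which is considerably more delicate.
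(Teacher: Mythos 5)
Your approach is genuinely different from the paper's. Instead of a maximum-principle argument (the paper derives the pointwise inequality $|\psi|\,|f|\leq 4C\,|\overline{\partial}_b f|$ from differentiating (\ref{torsion}), evaluates everything at a point $p_0$ where $|f|^2$ is maximal, and reaches a contradiction without ever needing $\psi$ itself to be a single constant), you try to reduce the whole problem to the real Obata-type Theorem~\ref{Ob}. Your computations that convert $f_{\alpha,\beta}=0$ and $f_{\alpha,\overline{\beta}}=-\tfrac{c\overline{f}}{2}\delta_{\alpha\beta}$ into the real Obata system $u_{\alpha\beta}=0$, $u_{\alpha,\overline{\beta}}=(-\tfrac{1+c}{4}u+\tfrac{\sqrt{-1}}{2}u_0)\delta_{\alpha\beta}$ for $u=\operatorname{Re}f$ are correct \emph{given} that $\psi\equiv c$ is one global real constant, and the phase-rotation trick $f\mapsto e^{-\sqrt{-1}\arg(c)/2}f$ is sound.

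The gap you yourself flag is genuine and is precisely what the paper's proof is designed to avoid. What Propositions~\ref{pbar} and~\ref{pb} give you is that $\psi$ is locally constant on $M\setminus K$, and the remark after~(\ref{torsion}) gives that $|\psi|^2$ is a globally well-defined constant. But $M\setminus K$ is only shown to be open and dense, not connected; a priori $\psi$ could take different values $c_1, c_2,\dots$ of the same modulus on different components of $M\setminus K$. In that case your candidate function $u=\operatorname{Re}f$ (or $\operatorname{Re}(e^{-\sqrt{-1}\arg c/2}f)$) does \emph{not} satisfy the Obata system globally on $M$, only component-by-component with different constants, so Theorem~\ref{Ob} cannot be invoked. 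The ``unique continuation'' step you allude to is not a proved lemma in any form here: the relation $\psi\overline{f}=f-2\sqrt{-1}f_0$ expresses a global smooth function as $c_i\overline{f}$ on each component $U_i$, and since $f$ may vanish on $K$ there is no evident contradiction between $c_1\neq c_2$; some actual argument (e.g.\ that $K$ cannot disconnect $M$, or a genuine unique-continuation statement for the system) is needed. By contrast, the paper's argument only uses the constancy of $|\psi|$ and works at one point $p_0$, so it never encounters this issue; this is the essential simplification you are missing. Until the global constancy of $\psi$ is established, the reduction to Theorem~\ref{Ob} does not go through.
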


\begin{proof}
By Proposition \ref{pbar} and Proposition \ref{pb}, $\overline{\partial}%
_{b}\psi=0$ and $\partial_{b}\psi=0$ on $M\backslash K$. Therefore, $\psi$ is locally
constant on $M\backslash K$. Since $\left\vert \psi\right\vert ^{2}$ extends
smoothly to $M$, $\left\vert \psi\right\vert ^{2}$ is constant on $M$.
Differentiating (\ref{torsion}) on $M\backslash K$ using Proposition \ref{ids}
and (\ref{gr2}), we get on $M\backslash K$%
\[
2A_{\alpha\beta,\gamma}\left\vert \overline{\partial}_{b}f\right\vert
^{2}=A_{\alpha\beta}f\overline{f}_{\gamma}.
\]
Hence,%
\begin{align*}
2A_{\alpha\beta,\gamma}f_{\overline{\alpha}}f_{\overline{\beta}}%
f_{\overline{\gamma}}\left\vert \overline{\partial}_{b}f\right\vert ^{2}  &
=A_{\alpha\beta}f_{\overline{\alpha}}f_{\overline{\beta}}f\left\vert
\overline{\partial}_{b}f\right\vert ^{2}\\
&  =-\sqrt{-1}Qf\left\vert \overline{\partial}_{b}f\right\vert ^{2}.%
\end{align*}
Thus, on $M\backslash K$, we have $Qf=2\sqrt{-1}A_{\alpha\beta,\gamma
}f_{\overline{\alpha}}f_{\overline{\beta}}f_{\overline{\gamma}}$, i.e.%
\[
\psi f=4\sqrt{-1}A_{\alpha\beta,\gamma}f_{\overline{\alpha}}f_{\overline
{\beta}}f_{\overline{\gamma}}/\left\vert \overline{\partial}_{b}f\right\vert
^{2}.
\]
From this, we obtain (first on $M\backslash K$ and then, by continuity, on the
whole $M$ as $\left\vert \psi\right\vert $ is continuous on $M$ and
$M\backslash K$ is open and dense)
\begin{equation}
\left\vert \psi\right\vert \left\vert f\right\vert \leq4C\left\vert
\overline{\partial}_{b}f\right\vert , \label{sifi}%
\end{equation}
where, $C=\max_{M}\sqrt{\sum\left\vert A_{\alpha\beta,\gamma}\right\vert ^{2}}$.

Let $p_{0}\in M$ be a point where $\left\vert f\right\vert ^{2}$ achieves its
maximum. Suppose $\overline{\partial}_{b}f\left(  p_{0}\right)  \neq0$, i.e
$p_{0}\in M\backslash K$. Then differentiating at $p_{0}$ and using
(\ref{prop}), we have%
\begin{align*}
0  &  =f\overline{f}_{\alpha}+f_{\alpha}\overline{f}\\
&  =\overline{f}_{\alpha}\left(  f+\psi\overline{f}\right)  .
\end{align*}
Hence,
\begin{equation}
\psi\left(  p_{0}\right)  =-\frac{f\left(  p_{0}\right)  }{\overline{f\left(
p_{0}\right)  }}. \label{sp0}%
\end{equation}
As $\left\vert \psi\right\vert $ is constant, we have $\left\vert
\psi\right\vert \equiv1$. By (\ref{f0}) we have $\sqrt{-1}f_{0}=\frac{1}%
{2}\left(  f-\psi\overline{f}\right)  $on $M\backslash K$. Differentiating and
using (\ref{prop}) yields%
\begin{align*}
\sqrt{-1}f_{0\alpha}  &  =\frac{1}{2}\left(  f_{a}-\psi\overline{f}_{\alpha
}\right)  =0,\\
\sqrt{-1}f_{0\overline{\alpha}}  &  =\frac{1}{2}\left(  f_{\overline{a}}%
-\psi\overline{f}_{\overline{\alpha}}\right) \\
&  =\frac{1}{2}(1-\left\vert \psi\right\vert ^{2})f_{\overline{\alpha}}\\
&  =0.
\end{align*}
Therefore, $f_{0}$ is constant. As $\int_{M}f_{0}=0$, we must have $f_{0}%
\equiv0$. Then (\ref{f0}) reduces to $f=\psi\overline{f}$. At $p_{0}$ this
yields $\psi\left(  p_{0}\right)  =f\left(  p_{0}\right)  /\overline{f\left(
p_{0}\right)  }$. This is contradictory to (\ref{sp0}). Therefore,
$\overline{\partial}_{b}f\left(  p_{0}\right)  =0$, then the inequality
(\ref{sifi}) implies $\left\vert \psi\left(  p_{0}\right)  \right\vert =0$.
Consequently, $\psi\equiv0$.
\end{proof}

\bigskip Since $\psi\equiv0$, we have, in view of (\ref{torsion}) (\ref{prop})
and (\ref{f0})
\begin{align*}
A_{\alpha\beta}  &  =0,\\
\partial_{b}f  &  =0,\\
\sqrt{-1}f_{0}  &  =\frac{1}{2}f.
\end{align*}
Write $f=u+\sqrt{-1}v$ in terms of its real part $u$ and imaginary part $v$.
From the above identities and Proposition \ref{ids} it is easy to prove the following

\begin{proposition}
We have $v=2u_{0}$, while $u$ satisfies%
\begin{align*}
u_{\alpha\beta}  &  =0,\\
u_{\alpha,\overline{\beta}}  &  =\left(  -\frac{1}{4}u+\frac{\sqrt{-1}}%
{2}u_{0}\right)  \delta_{\alpha\beta}.
\end{align*}

\end{proposition}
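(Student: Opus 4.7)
The plan is to reduce this proposition to a routine real/imaginary decomposition of already-established identities. At this stage we have in hand $A_{\alpha\beta}=0$, $\partial_{b}f=0$ and $\sqrt{-1}f_{0}=\frac{1}{2}f$, together with the original hypotheses (\ref{i1})--(\ref{i2}). Writing $u=\frac{1}{2}(f+\overline{f})$ and $v=\frac{1}{2\sqrt{-1}}(f-\overline{f})$, every covariant derivative of $u$ or $v$ can be read off from the corresponding derivatives of $f$ and $\overline{f}$, and no further analytic input is required.

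First I would split the identity $\sqrt{-1}f_{0}=\frac{1}{2}f$ into real and imaginary parts. With $f_{0}=u_{0}+\sqrt{-1}v_{0}$, comparison gives $-v_{0}=\frac{1}{2}u$ and $u_{0}=\frac{1}{2}v$; the second of these is precisely $v=2u_{0}$.

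Next I would verify the two equations on $u$. Since $\partial_{b}f=0$ gives $f_{\alpha}\equiv 0$, we have $f_{\alpha,\beta}=0$ and $f_{\alpha,\overline{\beta}}=0$ (the latter can also be checked directly from the commutation formula together with (\ref{i2}) and $\sqrt{-1}f_{0}=f/2$, as a consistency check). Conjugating (\ref{i1}) yields $\overline{f}_{\alpha,\beta}=0$, while conjugating (\ref{i2}) yields $\overline{f}_{\alpha,\overline{\beta}}=-\frac{1}{2}\overline{f}\,\delta_{\alpha\beta}$. Averaging $f$ and $\overline{f}$ therefore produces
\[
u_{\alpha,\beta}=\tfrac{1}{2}\bigl(f_{\alpha,\beta}+\overline{f}_{\alpha,\beta}\bigr)=0,\qquad u_{\alpha,\overline{\beta}}=\tfrac{1}{2}\bigl(f_{\alpha,\overline{\beta}}+\overline{f}_{\alpha,\overline{\beta}}\bigr)=-\tfrac{1}{4}\overline{f}\,\delta_{\alpha\beta}.
\]
Substituting $\overline{f}=u-\sqrt{-1}v=u-2\sqrt{-1}u_{0}$ into the last expression rewrites the mixed derivative as $\bigl(-\frac{1}{4}u+\frac{\sqrt{-1}}{2}u_{0}\bigr)\delta_{\alpha\beta}$, exactly as claimed.

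There is essentially no obstacle to overcome in this final step; it is pure bookkeeping once $\psi\equiv 0$ has been established. The only care required is to track how bars on indices behave under complex conjugation, which is standard. Once the proposition is in place, the conclusion of the main theorem is immediate: the real pluriharmonic function $u$ satisfies exactly the overdetermined system in the hypothesis of Theorem \ref{Ob}, so $(M,\theta)$ is CR equivalent to $\mathbb{S}^{2m+1}$ with its standard pseudohermitian structure, up to scaling.
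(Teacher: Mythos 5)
Your proof is correct and is exactly the routine verification the paper has in mind; the paper simply states that the proposition is ``easy to prove'' from $A_{\alpha\beta}=0$, $\partial_{b}f=0$, $\sqrt{-1}f_{0}=\tfrac{1}{2}f$, and (\ref{i1})--(\ref{i2}), and you have correctly filled in the conjugations, the real/imaginary split of $\sqrt{-1}f_{0}=\tfrac12 f$ giving $v=2u_{0}$, and the substitution $\overline{f}=u-2\sqrt{-1}u_{0}$ to reach the stated form of $u_{\alpha,\overline{\beta}}$.
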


With this proved, we can now apply the result of Li-Wang \cite{LW} (Theorem 5
therein) to conclude that $M$ is CR equivalent to $\mathbb{S}^{2m+1}$ with
its standard pseudohermitian structure. In fact, we do not need that result
in its full generality as we have the additional condition $A=0$ at our
disposal. Since the argument there under the additional condition $A=0$ is
simple, we give an outline for completeness. Let $g_{\theta}$ be the adapted
Riemannian metric and $D^{2}u$ the Riemannian Hessian. Then from the above
identities we obtain by standard calculation (see Proposition 2 in \cite{LW})%
\[
D^{2}u=-\frac{u}{4}g_{\theta},
\]
here we used the fact that the torsion $A=0$.

By Obata's theorem \cite{O}, $(M,g_{\theta})$ is isometric to the sphere
$\mathbb{S}^{2m+1}$ with the metric $g_{0}=4g_{c}$, where $g_{c}$ is the
canonical metric. Without loss of generality, we can take $(M,g_{\theta})$ to
be ($\mathbb{S}^{2m+1},g_{0}$). Then $\theta$ is a pseudohermitian structure
on $\mathbb{S}^{2m+1}$ whose adapted metric is $g_{0}$ and the associated
Tanaka-Webster connection is torsion-free. It is a well known fact that the
Reeb vector field $T$ is then a Killing vector field for $g_{0}$ (see Remark 1 in \cite{LW}). 
Therefore there
exists a skew-symmetric matrix $A$ such that for all $X\in\mathbb{S}%
^{2m+1},T(X)=AX$, here we use the obvious identification between
$z=(z_{1},\ldots,z_{m+1})\in\mathbb{C}^{m+1}$ and $X=(x_{1},y_{1}%
,\ldots,x_{m+1},y_{m+1})\in\mathbb{R}^{2m+2}$. Changing coordinates by an
orthogonal transformation we can assume that $A$ is of the following form
\[
A=\left[
\begin{array}
[c]{ccc}%
\begin{array}
[c]{cc}%
0 & a_{1}\\
-a_{1} & 0
\end{array}
&  & \\
& \ddots & \\
&  &
\begin{array}
[c]{cc}%
0 & a_{m+1}\\
-a_{m+1} & 0
\end{array}
\end{array}
\right]
\]
where $a_{i}\geq 0$. Therefore
\[
T=\sum_{i}a_{i}\left(  y_{i}\frac{\partial}{\partial x_{i}}-x_{i}%
\frac{\partial}{\partial y_{i}}\right)
\]
Since $T$ is of unit length we must have
\[
4\sum_{i}a_{i}^{2}(x_{i}^{2}+y_{i}^{2})=1
\]
on $\mathbb{S}^{2m+1}$. Therefore all the $a_{i}$'s are equal to $1/2$. It
follows that
\[
\theta=g_{0}(T,\cdot)=2\sqrt{-1}\,\overline{\partial}(|z|^{2}-1).
\]
This finishes the proof of Theorem \ref{main}.

\end{document}